\newtheorem*{rep@theorem}{\rep@title}
\newcommand{\newreptheorem}[2]{%
	\newenvironment{rep#1}[1]{%
		\def\rep@title{#2 \ref{##1}}%
		\begin{rep@theorem}}%
		{\end{rep@theorem}}}
\newtheorem{theorem}{Theorem}
\newtheorem{lemma}[theorem]{Lemma}
\newtheorem{definition}{Definition}
\newtheorem{corollary}[theorem]{Corollary}
\newtheorem{proposition}[theorem]{Proposition}
\newtheorem{obs}[theorem]{Observation}
\newtheorem{remark}[theorem]{Remark}
\newcommand{\grundy}{\mathcal{G}}
\newcommand{\outcomeP}{\mathcal{P}}
\newcommand{\outcomeN}{\mathcal{N}}
\newcommand{\nimsum}{\oplus}
\newcommand{\arck}{\textsc{Arc-Kayles}}
\newcommand{\lejeu}{\textsc{WAK}}
\newcommand{\nodek}{\textsc{Node-Kayles}}
\DeclareMathOperator{\hasLoop}{loop}
\DeclareMathOperator{\pos}{pos}
\tikzset{vertex/.style={draw, circle, minimum size = 15pt, inner sep = 0pt}}
\tikzset{move arrow/.style={line width=1mm, -{Stealth[angle=60:7pt, inset=0pt]}}}
\newcommand{\selfloop}[1]{\draw[looseness = 8] (#1) edge[in = 60, out=120] (#1) ;}
\newcommand{\dashedloop}[1]{\draw[looseness = 8, dashed] (#1) edge[in = 60, out=120] (#1) ;}
\newcommand{\soloop}[1]{\tikz[baseline=-4]{\useasboundingbox (-0.3,-0.3) rectangle (0.3,0.9) ; \node[vertex] (2) at (0,0) {$#1$}; \selfloop{2} ;}}
\newcommand{\twoloops}[2]{\tikz[baseline=-4]{ \useasboundingbox (-0.3,-0.3) rectangle (1.3,0.9) ; \node[vertex] (2) at (0,0) {$#1$}; \selfloop{2} ;\node[vertex] (1) at (1,0) {$#2$}; \selfloop{1} ;\draw(1)--(2);}}
\DeclareMathOperator{\opt}{opt}
\DeclareMathOperator{\mex}{mex}
\title{A generalization of \arck}
\author{Antoine Dailly \and Valentin Gledel \and Marc Heinrich}
\institute{Antoine Dailly \and Valentin Gledel \and Marc Heinrich  \at Univ Lyon, Université Lyon 1, LIRIS UMR CNRS 5205, F-69621, Lyon, France.}
\begin{document}
	
	\maketitle
	
	\begin{abstract}
		The game \arck\ is played on an undirected graph with two players taking turns deleting an edge and its endpoints from the graph. We study a generalization of this game, \textsc{Weighted Arc Kayles} (\lejeu\ for short), played on graphs with counters on the vertices. The two players alternate choosing an edge and removing one counter on both endpoints. An edge can no longer be selected if any of its endpoints has no counter left. The last player to play a move wins. We give a winning strategy for \lejeu\ on trees of depth $2$. Moreover, we show that the Grundy values of \lejeu\ and \arck\ are unbounded. We also prove a periodicity result on the outcome of \lejeu\ when the number of counters is fixed for all the vertices but one. Finally, we show links between this game and a variation of the non-attacking queens game on a chessboard.
		
		\keywords{Combinatorial Games \and Arc-Kayles \and Graphs}
	\end{abstract}
	
	\section*{Acknowledgements}
	This work has been supported by the ANR-14-CE25-0006 project of the French National Research Agency.
	
	\noindent The authors would like to thank Nicolas Bousquet for his help.

	
	\section{Introduction}

	Combinatorial games are finite two-player games without chance nor hidden information. Combinatorial Game Theory (see \cite{siegel} for a survey) was developed to analyze games when the winner is determined by the last move.
	For these games, one of the player has a winning strategy, \emph{i.e.}, one player is guaranteed to win the game, whatever the other player does. It raises three natural questions: which player has a winning strategy? What is this strategy? Can we compute it efficiently?

	In 1978, Schaefer \cite{schaefer} introduced several combinatorial games on graphs. Among them is the game \arck. In this game, players take turns deleting an edge and its endpoints from the graph, until no edge remain. The winner is the player making the last move. Another way to describe this game is the following: the two players select edges in order to build a maximal matching. The first player's goal is to create a matching of odd size, while the second player tries to make it of even size.
	
	Schaefer introduced \arck\ as a variant of \nodek, which is a game where the players alternate selecting a vertex and deleting it and all its neighbours from a graph. 
	He proved that \nodek\ is PSPACE-complete.
	This game has then been studied on specific graph classes:
	it has been proven that deciding its outcome is polynomial when playing on graphs with bounded asteroidal number, cocomparability graphs, cographs~\cite{bodKayles} and bounded degree stars~\cite{fleischer}. A derived game called \textsc{Grim} has also been studied in~\cite{grim}. In this variant, the neighbours of the selected vertex are deleted from the graph if and only if they become isolated.
	
	It is still an open question whether the problem of deciding which player has a winning strategy for \arck\ is PSPACE-complete or not, and very few results, either general or on specific graph classes, are known. Played on a path, it is equivalent to the octal game $0.07$, also called \textsc{Dawson's Kayles}, solved in~\cite{guysmith}. More recently, some results have been found for specific classes of graphs: cycles, wheels and subdivided stars with three paths~\cite{huggan}. It was also shown in~\cite{valia} that the problem is FPT when parameterized by the number of rounds, meaning it can be solved in time $O(f(k) + \text{poly}(n))$ where $k$ is the number of rounds of the game, $n$ is the number of vertices and $f$ is some computable function.

	We study a generalization of \arck, called \textsc{Weighted-Arc-Kayles} (\lejeu\ for short). The game \lejeu\ is played on an undirected graph with counters on the vertices. The players take turns selecting an edge, and removing one counter from each of its endpoints. If an edge has an endpoint with no counter left, then it cannot be selected anymore. The game ends when no edge can be selected anymore.
	When there is only one counter on each vertex, this game is exactly \arck.
	This game was first proposed by Huggan in~\cite{H15}, along with several others, as a possible extension of \arck.
	Our study of \lejeu\ is also motivated by a variation of the non-attacking queens game~\cite{noon2006non}. Consider the game where players alternately place non-attacking rooks on a not necessarily square chessboard. We show that this game can be represented as an instance of~\lejeu. 
	
	In order to simplify the study of some graphs, we allow the vertices to have loops.
	In this paper, we prove that one can decide in polynomial time which player has a winning strategy on loopless trees of depth at most 2 (we consider that a tree reduced to a single vertex has depth~0). This directly solves a particular case of the non-attacking rooks game.
	\begin{theorem}
		\label{thm:mainResult}
		There is a polynomial time algorithm computing the outcome of \lejeu\ on any loopless tree of depth at most $2$.
	\end{theorem}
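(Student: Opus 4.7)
The plan is to give an explicit polynomial-time procedure computing the outcome of \lejeu\ on loopless depth-$\le 2$ trees, together with a winning strategy. Let $T$ be such a tree rooted at $r$ with counter $a_0$; denote by $c_1,\dots,c_k$ the children of $r$ with counters $a_i$, and by $B_i$ the sum of the counters of the leaves attached to $c_i$. I call edges incident to $r$ \emph{upper} and the remaining edges \emph{lower}. The trivial base cases (empty tree, single vertex, and depth-$1$ star) are disposed of immediately: on a star with centre counter $a$ and total leaf counter $B$, every move decreases $a$ by exactly one, so the game length is forced to be $\min(a,B)$ and the winner is determined by its parity.

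For the depth-$2$ case, I would first prove a compression lemma: the game on $T$ depends only on $a_0$ and on the multiset of pairs $(f_i,g_i)$, where $f_i := \min(a_i,B_i)$ is the ``lower capacity'' at $c_i$ and $g_i := a_i - f_i$ its ``slack''. The verification is a short calculation: a lower move at $c_i$ always decreases $f_i$ by one, while an upper move $\{r,c_i\}$ decreases $g_i$ by one if $g_i>0$ and otherwise decreases $f_i$ by one. In particular, the individual leaf counters at $c_i$ never enter the analysis, only their sum does.

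With the compressed state at hand I would split the children into \emph{slack} ones ($g_i>0$) and \emph{tight} ones ($g_i=0$), and introduce $L := \sum_i f_i$ and $U := \sum_i g_i$. Decomposing the total number of moves into $l$ lowers, $u_s$ slack-uppers, and $u_t$ tight-uppers, the terminal condition reads $l+u_t=L$ together with $u_s+u_t=a_0$ or $u_s=U$. When $a_0\ge L+U$, the game length is forced to be $L+U$ and the outcome is given by its parity. When $a_0<L+U$, the length equals $L+u_s$ with $u_s$ constrained to the interval $[\max(0,a_0-L),\min(a_0,U)]$, and I expect to prove, by induction on $a_0+L+U$ with a case split on the number of tight children and on the parities of $L$, $U$, and $a_0$, that the first player wins iff a concrete arithmetic predicate in these four quantities holds. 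The witnessing strategy is a pairing: respond inside the child just played whenever possible, and otherwise play a carefully chosen upper move to correct the parity of $u_s$.

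The main obstacle will be the intermediate regime in which $a_0$ is comparable to $U$ plus the number of tight children: there the interval for $u_s$ can straddle both parities and both players have genuine leverage over it. I anticipate this will require a handful of small-case verifications (a single child, the path $P_4$, and the double broom) as building blocks for the induction. Once the predicate is established, the polynomial-time algorithm is immediate: compute the $(f_i,g_i)$ from the input in linear time and evaluate the predicate.
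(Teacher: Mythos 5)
Your compression step is sound: a lower move decreases $f_i$, an upper move decreases $g_i$ (or $f_i$ when $g_i=0$), so the position is indeed determined by $a_0$ and the multiset of pairs $(f_i,g_i)$ — this is essentially the paper's merging of false-twin leaves, and your star base case is also fine. The gap is in the next step, which is where the whole content of the theorem lies: you never state the ``concrete arithmetic predicate'', and in fact no predicate in the four aggregates $a_0$, $L=\sum_i f_i$, $U=\sum_i g_i$ and the number of tight children can exist, because the outcome is not a function of these quantities. Concretely, take a root of weight $2$ with two children. In position $A$ the children have weights $1$ and $3$, each with one leaf of weight $1$ below it; in position $B$ the children have weights $2$ and $2$, the first with a leaf of weight $2$, the second being itself a leaf of the root. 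Both positions have $a_0=2$, $L=2$, $U=2$, one tight and one slack child (so all your parities and counts agree). Yet $A$ is an $\outcomeN$-position: playing the edge from the root to the weight-$1$ child leaves a star centred at the weight-$3$ child whose remaining length is forced to be $\min(3,1+1)=2$. And $B$ is a $\outcomeP$-position: the second player wins by copying the first player's move (two plays of the root--$c_1$ edge exhaust the root; two plays of the $c_1$--leaf edge, or two plays of the root--$c_2$ edge, leave a single edge with weights $(2,2)$, of forced even length). So the outcome also depends on how $L$ splits between the tight and the slack children, not only on its total.

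This finer information is exactly what the paper's reduction retains: the tight children collapse into a looped vertex of weight $\sum_{\mathrm{tight}} f_i$, while the slack branches collapse into a path with $X=\sum_{\mathrm{slack}}(f_i+g_i)$ and $Y=\sum_{\mathrm{slack}} f_i$ (Lemma~\ref{th:mainTh}). The parity of $L+U$ decides the outcome only in the regime $U<a_0$ (via Lemma~\ref{lemma:P4withloops}); when $U\geq a_0$ the reduced graph degenerates and the answer depends on the parities of $\sum_{\mathrm{tight}}f_i$, $\sum_{\mathrm{slack}}f_i$ and $a_0$ separately, together with which of them is larger (Remark~\ref{rk:sumOneLoppTwoLoops}) — my counterexample sits precisely in that regime, which is also the ``intermediate regime'' you flagged as the main obstacle. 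So to repair your argument you would have to enlarge the invariant to at least the pair $\bigl(\sum_{\mathrm{tight}}f_i,\sum_{\mathrm{slack}}f_i\bigr)$ alongside $U$ and $a_0$, and then actually state and prove the characterization (your proposal stops at announcing the induction); at that point you are essentially reconstructing the paper's route through its reduction lemmas.
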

	The outcome of \lejeu\ on a loopless tree of depth at most $2$ is determined by the parity of the sum of the numbers of counters of some sets of vertices, and by inequalities between them. This is not the case with $C_3$, the cycle on three vertices, which suggests that it might be harder to characterize outcomes for graphs with induced cycles, or at least non-bipartite graphs.
	
	Grundy values are a tool used in Combinatorial Game Theory to refine the question of which of the two players wins (a more formal definition is given in Section~\ref{sec:defnot}). The Grundy values of \arck\ (and by extension of \nodek) were conjectured to be unbounded in~\cite{huggan}. We give a positive answer to this conjecture as a corollary of the following:
	\begin{theorem}
		\label{prop:grundyValuesUnbounded}
		The Grundy values for the game \lejeu~are unbounded.
	\end{theorem}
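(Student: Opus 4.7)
The plan is to exhibit, for every integer $N \ge 0$, an explicit \lejeu-position whose Grundy value is at least $N$. The construction I would try is recursive: assuming positions $H_0, H_1, \ldots, H_{N-1}$ with Grundy values $0, 1, \ldots, N-1$ have already been built, I would attach each $H_i$ to a common ``hub'' via a dedicated edge $e_i$ and choose the counters so that playing $e_i$ zeroes the hub and isolates $H_i$ from the rest of the position. The set of options of the resulting position $G_N$ then contains games of Grundy values $0, 1, \ldots, N-1$, so by the mex-rule $\grundy(G_N) \ge N$.

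Before committing to this template, it is worth checking that simpler families cannot do the job. Direct computation rules out the most obvious candidates: on the triangle $C_3$ no counter configuration seems to push the Grundy above~$3$; stars $K_{1,n}$ whose leaves all carry the same weight reduce to a forced sequence of moves and so have Grundy in $\{0,1\}$; and the all-ones path is essentially Dawson's Kayles, whose Grundy sequence is periodic and therefore bounded. Any successful construction must therefore simultaneously grow the graph \emph{and} tune the counters, as the recursive template above does.

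The main obstacle is engineering the counters so that each distinguished edge $e_i$ really isolates the intended copy of $H_i$: if other edges of $G_N$ remain playable after the move, the corresponding option Grundy becomes the xor of several subgame values rather than just $\grundy(H_i)$, and the desired correspondence between options and target Grundies breaks. The remedy I would use is to separate each $H_i$ from the hub by a ``buffer'' of vertices with small counters, tuned so that playing $e_i$ immediately zeroes out every buffer except the one leading to $H_i$. The Sprague--Grundy theorem applied to the resulting disjoint decomposition then reduces the option Grundy to $\grundy(H_i)$, possibly xor-ed with a constant contribution from inert components that one can absorb by attaching a suitable unit-Grundy edge to $H_i$. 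Once this bookkeeping is in place, induction on $N$ delivers Theorem~\ref{prop:grundyValuesUnbounded}.
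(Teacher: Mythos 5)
Your overall template (a weight-$1$ hub whose removal isolates prescribed subpositions, then the mex rule) is the same skeleton the paper uses, but the step you yourself flag as ``the main obstacle'' is exactly where the proposal breaks, and your remedy does not repair it. A move in \lejeu\ removes one counter from the two endpoints of the chosen edge only, so playing $e_i$ cannot ``zero out every buffer except the one leading to $H_i$'': the buffers on the other branches keep all their counters, they merely become separated from the dead hub, and each component consisting of $H_j$ plus its buffer ($j\neq i$) survives with some unknown Grundy value. The option reached by $e_i$ therefore has value $\grundy(H_i)$ xor-ed with $\bigoplus_{j\neq i}\grundy(H_j+\text{buffer}_j)$, a quantity that \emph{depends on $i$} and is not under your control; a single ``unit-Grundy edge attached to $H_i$'' cannot absorb it, because the needed correction changes from one option to the next and is itself unknown. (A smaller issue: you assume positions of Grundy values exactly $0,\dots,N-1$ are already available; this is justifiable via the mex property --- any position of value $M$ has options of every value below $M$ --- but you never argue it.)

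The paper closes exactly this gap with two devices you are missing. First, it maintains as an induction invariant that each constructed graph $G_i$ contains a looped weight-$1$ vertex $u_i$ on which playing the loop is a \emph{winning} move; the hub is joined directly to $u_i$ in a copy of $G_i$, so the hub-move decrements $u_i$ itself and leaves that copy as a $\outcomeP$-position, contributing $0$ to the Nim-sum instead of a junk value. Second, it adds a \emph{second, detached} copy of every $G_j$, so that after the hub dies the untouched attached copies pair with the detached ones and cancel, since $\grundy(G_j)\oplus\grundy(G_j)=0$, leaving the option value equal to $\grundy(G_i)$ exactly; note also that the paper only needs pairwise distinct Grundy values, not the exact values $0,\dots,N-1$, to conclude unboundedness. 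Without an analogue of these two mechanisms --- or some other concrete way to force the Nim-sum of the leftover components to vanish independently of $i$ --- your construction does not establish the claimed option values, so the proposal as written has a genuine gap.
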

	
	The paper is organized as follows: in Section~\ref{sec:defnot}, we give basic definitions and formally define~\lejeu. Section~\ref{sec:tours} shows the links between~\lejeu\ and the game of placing non-attacking rooks on a chessboard. In Section~\ref{sec:first}, we define the core concept of \emph{canonical graphs}. This notion is used to prove a relation between \lejeu\ and \arck. It also simplifies the study of graphs in Section~\ref{sec:trees}, where we characterize which player has a winning strategy for~\lejeu\ on loopless trees of depth at most~2. This characterization only depends on the parity of the weights, and some inequalities between them. Next, we present in Section~\ref{sec:perio} a periodicity result on the outcomes of \lejeu\ positions when the number of counters is fixed for all but one vertex. Finally, we prove in Section~\ref{sec:unbound} that the Grundy values of \lejeu\ and \arck\ are unbounded.

	\section{Definitions and notations}
	\label{sec:defnot}
	
	\subsection{Combinatorial Game Theory}
	
	We will give basic definitions of Combinatorial Game Theory that will be used in the paper. For more details, the interested reader can refer to~\cite{lip,winningways,siegel}.
	\emph{Combinatorial games} \cite{winningways} are two-player games where: 
	\begin{itemize}[noitemsep,topsep=0pt]
		\item the players play alternately;
		\item there is no chance, nor hidden information;
		\item the game is finite; 
		\item the winner is determined by the last move alone.
	\end{itemize}
	
	
	In this paper, the games are \emph{impartial}, i.e., both players have exactly the same set of available moves on any position. The only difference between the two players is who plays the first move.
	Every position~$G$ of a combinatorial game can be viewed as a combinatorial game with~$G$ as the initial position. By abuse of notation, we will often consider positions as games.

	From a given position $G$, the positions that can be reached from~$G$ by playing a move are called the \emph{options} of~$G$. The set of the options of~$G$ is denoted~$\opt(G)$.		
	A position of an impartial game can have exactly two \emph{outcomes}: either the first player has a winning strategy and it is called an \emph{$\outcomeN$-position} (for "$\outcomeN$ext player win"), or the second player has a winning strategy and it is called a \emph{$\outcomeP$-position} (for "$\outcomeP$revious player win").
	The outcome of a position $G$ can be computed recursively from the outcome of its options using the following characterization:
	
	\begin{proposition}[\cite{lip}]
		\label{prop:outcomes}
		Let $G$ be a position of an impartial game in normal play.
		\begin{itemize}[noitemsep, topsep=0pt]
			\item If $opt(G)=\emptyset$, then $G$ is a $\outcomeP$-position;
			\item If there exists a position $G' \in opt(G)$ such that $G'$ is a $\outcomeP$-position, then $G$ is an $\outcomeN$-position and a winning move is to play from $G$ to $G'$;
			\item If every option of $G$ is an $\outcomeN$-position, then $G$ is a $\outcomeP$-position.
		\end{itemize}
	\end{proposition}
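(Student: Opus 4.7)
The plan is to prove the three bullet points by a single induction on the length of the longest play starting from $G$. This is a well-founded induction because, by the definition of a combinatorial game, every play is finite; equivalently, the game tree rooted at $G$ has no infinite branch. Throughout, I would use the normal-play convention stated in the introduction: the player who cannot move loses.

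First I would handle the base case. If $\opt(G) = \emptyset$, then the first player to move at $G$ has no available move and therefore loses by the normal-play convention, while the other player wins without doing anything. So $G$ is won by the Previous player, i.e.\ $G$ is a $\outcomeP$-position, which proves the first bullet.

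Next I would do the inductive step, splitting on whether $\opt(G)$ contains a $\outcomeP$-position. If there exists $G' \in \opt(G)$ with $G'$ a $\outcomeP$-position, the Next player at $G$ plays the move $G \to G'$; by the inductive hypothesis applied to $G'$ (which has a strictly shorter longest play), the player who just moved into $G'$ is the Previous player at $G'$ and therefore wins. Hence $G$ is an $\outcomeN$-position, and the winning move is exactly $G \to G'$, proving the second bullet. Conversely, if every $G' \in \opt(G)$ is an $\outcomeN$-position, then whatever move the Next player makes from $G$ lands in a position where the player \emph{about to move} wins; but that player is the Previous player from $G$'s perspective. Thus the Previous player at $G$ wins regardless of the first move, so $G$ is a $\outcomeP$-position, proving the third bullet.

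The only subtlety, and the thing to state carefully, is the well-foundedness of the recursion: one needs the finiteness axiom of combinatorial games to be sure that the inductive hypothesis is available at every option. Beyond this, the argument is essentially a tautology once the definitions of $\outcomeN$- and $\outcomeP$-positions and of the normal-play convention are unfolded; there is no genuine obstacle. I would keep the write-up short, emphasizing that the three cases are exhaustive (either there are no options, or at least one option is a $\outcomeP$-position, or all options are $\outcomeN$-positions) and that each case directly matches the corresponding bullet.
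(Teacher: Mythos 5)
Your proof is correct and is the standard induction on the length of the longest play, which is exactly the argument behind this classical result; the paper itself does not prove it but cites it from the literature, where the same inductive unfolding of the normal-play convention is used. Nothing further is needed.
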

	
	Although not classical, we define a relation between games based on their outcomes: if two games $G_1$ and $G_2$ have the same outcome, they are \emph{outcome-equivalent}, and we write $G_1 \sim G_2$.
	
	Given two games $G_1$ and $G_2$, we define their \emph{disjoint sum}, denoted $G_1 + G_2$, as the game where, at their turn, the players play a legal move on either $G_1$ or $G_2$ until both games are finished. The player making the last move wins.
	If $G_1$ is a $\outcomeP$-position, then $G_1 + G_2$ has the same outcome as $G_2$. Indeed, the player with a winning strategy on $G_2$ can apply this strategy and reply to any move made by his opponent on $G_1$ using the second player's winning strategy on $G_1$.
	In order to study the outcome of $G_1 + G_2$ in the case where both $G_1$ and $G_2$ are $\outcomeN$-positions, we refine the outcome-equivalence by a relation called the \emph{Grundy-equivalence}. Two games $G_1$ and $G_2$ are \emph{Grundy-equivalent}, denoted by $G_1 \equiv G_2$, if and only if for any game $G$, $G_1+G$ and $G_2+G$ have the same outcome, i.e., $G_1+G \sim G_2+G$.
	In particular, if $G_1 \equiv G_2$, then $G_1+G_2$ is a $\outcomeP$-position.
	
	We can attribute a value to a game according to its Grundy equivalence class, called the \emph{Grundy value}. The Grundy value of a game position $G$, denoted $\grundy(G)$ can be computed using the Grundy values of its options thanks to the following formula:
	$$
	\grundy(G)=mex(\grundy(G') | G' \in opt(G))
	$$
	where, given a finite set of nonnegative integers $S$, $mex(S)$ is the smallest nonnegative integer not in $S$.
	In particular, a position $G$ is a $\outcomeP$-position if and only if $\grundy(G)=0$, which is consistent with Proposition~\ref{prop:outcomes}.
	
	One of the most fundamental results in Combinatorial Game Theory is the Sprague-Grundy Theorem, which gives the Grundy value of the disjoint sum of two impartial games:
	
	\begin{theorem}[Sprague-Grundy Theorem \cite{Spra36}] \label{thm:sprgru}
		Given two impartial games $G_1$ and $G_2$, we have 
		$$\grundy(G_1+G_2)=\grundy(G_1)\nimsum\grundy(G_2),$$
		where $\nimsum$, called the Nim-sum, is the bitwise XOR. 
	\end{theorem}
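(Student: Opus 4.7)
The plan is to prove the identity $\grundy(G_1 + G_2) = \grundy(G_1) \oplus \grundy(G_2)$ by structural induction on the disjoint sum, using the recursive definition of the Grundy value via \texttt{mex}. Write $a = \grundy(G_1)$, $b = \grundy(G_2)$, and set $c = a \oplus b$. The options of $G_1 + G_2$ are exactly the positions $G_1' + G_2$ for $G_1' \in \opt(G_1)$ and the positions $G_1 + G_2'$ for $G_2' \in \opt(G_2)$; by induction their Grundy values are $\grundy(G_1') \oplus b$ and $a \oplus \grundy(G_2')$ respectively. It then suffices to show that (i) no option has Grundy value $c$, and (ii) for every nonnegative integer $d < c$, some option has Grundy value $d$.

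For (i), the mex definition gives $\grundy(G_1') \neq a$ for every $G_1' \in \opt(G_1)$, so $\grundy(G_1') \oplus b \neq a \oplus b = c$, and symmetrically for options on the $G_2$ side. This part is a direct consequence of the fact that XORing with a fixed integer is a bijection.

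The heart of the argument, and the step I expect to be the main obstacle, is (ii). Given $d < c$, let $e = c \oplus d$, which is nonzero, and let $k$ be the position of its highest set bit. Since $d < c$, the $k$-th bit of $c$ is $1$ and the $k$-th bit of $d$ is $0$; consequently the $k$-th bit of $a$ differs from the $k$-th bit of $b$, so one of $a, b$ has that bit set. Assume without loss of generality that $a$ does; then $a \oplus e < a$, because flipping the $k$-th bit from $1$ to $0$ decreases the value by more than any changes in the lower bits can restore. By the \texttt{mex} property applied to $G_1$, every nonnegative integer strictly less than $a = \grundy(G_1)$ is the Grundy value of some option of $G_1$, so there exists $G_1' \in \opt(G_1)$ with $\grundy(G_1') = a \oplus e$. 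The induction hypothesis then yields
\[
\grundy(G_1' + G_2) = (a \oplus e) \oplus b = c \oplus e = d,
\]
which completes the verification that $\grundy(G_1 + G_2) = \mex\{0, 1, \ldots\} \cap \{\text{values attained}\} = c$.

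The only subtle point is justifying in (ii) that every value strictly below $\grundy(G_1)$ is actually realized as the Grundy value of some option; this follows from the \texttt{mex} formula, since if some $a' < a$ were missing from $\{\grundy(G_1') : G_1' \in \opt(G_1)\}$ then we would have $\mex$ of that set at most $a'$, contradicting $\grundy(G_1) = a$. With this in hand, induction closes the argument, and the base case (a terminal position of $G_1 + G_2$ has no options, hence Grundy value $0 = 0 \oplus 0$) is immediate.
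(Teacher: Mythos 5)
Your argument is correct: it is the standard proof of the Sprague--Grundy disjoint-sum theorem, with part (i) using that XOR with a fixed value is injective and part (ii) using that every value below a \texttt{mex} is realized among the options, the whole thing closed by induction (well-founded since the games are finite). The paper itself offers no proof to compare against --- it states the result as a classical theorem with a citation to Sprague --- so there is nothing to reconcile; your write-up is a complete and faithful reproduction of the textbook argument, and the only cosmetic quibble is the garbled final display, which should simply read that the set of attained values contains every $d<c$ and omits $c$, hence its \texttt{mex} is $c$.
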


	\subsection{Definition of \lejeu~and notations}
	
	In the whole paper we consider weighted graphs $G=(V,E,\omega)$, where $V$ is the set of vertices, $E$ is the set of edges, and $\omega: V \rightarrow \mathbb{N}^{*}$ gives the number of counters on each vertex.
	Graphs are undirected. There may be loops, in which case an edge looping on a vertex $u$ is denoted by $(u,u)$. For every vertex $u$, $\hasLoop(u)$ is a boolean with value \texttt{True} if and only if the edge $(u,u)\in E$, in which case we say that a loop is \emph{attached to} $u$.
	
	
	\begin{figure}[!ht]
		\centering
		\begin{tikzpicture}
		\node[scale=0.8] (tamere) at (-4,-3) {Play on the leftmost edge.};
		\node[scale=0.8] (tamerelareloue) at (0,-3) {Play on the loop.};
		\node[scale=0.8] (tamerelamacron) at (4,-3) {Play on the rightmost edge.};
		\node[scale=0.8] (tamerelafraise) at (4,-3.25) {This edge cannot be selected anymore.};
		\node[draw, inner sep = 0pt] (orig) at (0,0) {
			\begin{tikzpicture}
			\clip (-0.4,-0.3) rectangle (2.4,0.9) ;
			\node[vertex] (1) at (0,0) {2};
			\node[vertex] (2) at (1,0) {5};
			\node[vertex] (3) at (2,0) {1};
			\draw (1) to (2);
			\draw (2) to (3);
			\selfloop{2} ;
			\end{tikzpicture}
		};
		\node (pos1) at (-4,-2) {
			\begin{tikzpicture}
			\clip (-0.4,-0.3) rectangle (2.4,0.9) ;
			\node[vertex] (1) at (0,0) {1};
			\node[vertex] (2) at (1,0) {4};
			\node[vertex] (3) at (2,0) {1};
			\draw[] (1) to (2);
			\draw (2) to (3);
			\selfloop{2} ;
			\end{tikzpicture}
		};
		\node (pos2) at (0,-2) {
			\begin{tikzpicture}
			\clip (-0.4,-0.3) rectangle (2.4,0.9) ;
			\node[vertex] (1) at (0,0) {2};
			\node[vertex] (2) at (1,0) {4};
			\node[vertex] (3) at (2,0) {1};
			\draw (1) to (2);
			\draw (2) to (3);
			\draw[looseness = 8] (2) edge[in = 60, out=120] (2) ;
			\end{tikzpicture}
		};
		\node (pos3) at (4,-2) {
			\begin{tikzpicture}
			\clip (-0.4,-0.3) rectangle (2.4,0.9) ;
			\node[vertex] (1) at (0,0) {2};
			\node[vertex] (2) at (1,0) {4};
			\node[vertex] (3) at (2,0) {0};
			\draw (1) to (2);
			\draw (2) to (3);
			\selfloop{2} ;
			\end{tikzpicture}
		};
		\clip (orig.south) + (-4,0) rectangle (4,-2) ;
		\draw[->, line width = 2pt, shorten <=-2pt] (orig) -- (pos1);
		\draw[->, line width = 2pt] (orig) -- (pos2);
		\draw[->, line width = 2pt, shorten <=-2pt] (orig) -- (pos3);
	\end{tikzpicture}
	\caption{\label{fig:graphGame} Example of possible moves for Weighted Arc Kayles on a given position.}
\end{figure}
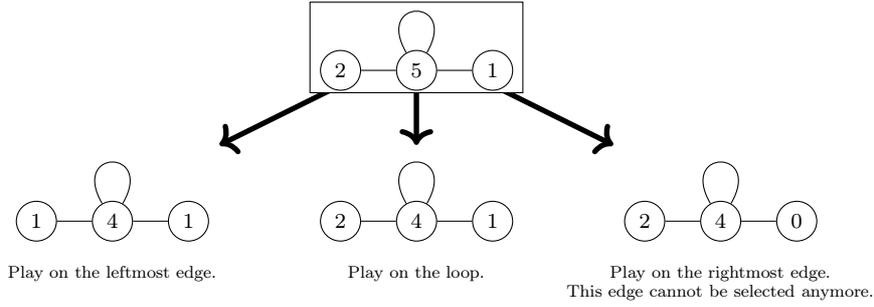

At each turn, the current player selects an edge and removes one counter from both of its endpoints (or its unique endpoint, if the edge is a loop). For any vertex $u$ such that $\omega(u)=0$, the edges $(u,v)$ cannot be selected anymore.
The game then continues until no edge can be selected anymore.
Figure~\ref{fig:graphGame} shows an example of the moves available from a given position. 
For simplicity and if confusion is not possible, vertices on caption of figures will be named after their number of counters.

We consider that, when selecting a loop, one counter is removed from its unique endpoint. This convention is defined since we introduce loops to simplify the study of several graphs. The other possible convention, which would remove two counters from the endpoint, could also be considered.

If the number of counters of one vertex reaches zero, then the edges adjacent to this vertex cannot be played anymore and the vertex can be removed from the graph. Note that if the graph $G$ is not connected, then \lejeu\ played on $G$ is equivalent to the disjoint sum of the connected components of $G$. Consequently, if we can compute the Grundy values of each of the connected components, we can use Theorem~\ref{thm:sprgru} and get the Grundy value of $G$. 

\begin{obs}
	\label{prop:playingOnANonConnectedGraph}
	Let $G$ be a non-connected graph, and $G_1,\ldots,G_k$ its connected components.
	We have $$\grundy(G)=\grundy\big(\sum_{i=1}^{k} G_i\big)=\bigoplus_{i=1}^{k}\grundy(G_i).$$
\end{obs}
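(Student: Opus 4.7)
The plan is to show that \lejeu\ played on a disconnected graph $G$ is literally the same combinatorial game as the disjoint sum of \lejeu\ played on each connected component $G_i$, and then invoke the Sprague-Grundy theorem inductively.

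First I would observe that every edge of $G$ lies entirely inside one of the connected components $G_i$, since by definition of a connected component no edge can join vertices of two distinct components. Consequently, any legal move in \lejeu\ on $G$ amounts to selecting an edge in exactly one $G_i$ and decrementing the counters of its endpoints in that component only, leaving the other components unchanged. This is precisely the definition of a move in the disjoint sum $G_1 + G_2 + \cdots + G_k$ recalled in Section~\ref{sec:defnot}. Hence the two games have identical sets of positions and identical move relations, which gives $\grundy(G) = \grundy\bigl(\sum_{i=1}^{k} G_i\bigr)$.

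It then remains to show $\grundy\bigl(\sum_{i=1}^{k} G_i\bigr) = \bigoplus_{i=1}^{k} \grundy(G_i)$, which I would prove by a short induction on $k$. The base case $k=1$ is immediate, and the inductive step follows from Theorem~\ref{thm:sprgru} applied to the two games $A = \sum_{i=1}^{k-1} G_i$ and $B = G_k$, together with the associativity of the Nim-sum $\nimsum$.

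No real obstacle is expected here: the statement is essentially an instance of the Sprague-Grundy theorem, and the only point requiring any care is making explicit that an edge of $G$ cannot cross two components, which follows directly from the definition of connected components.
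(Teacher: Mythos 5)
Your proposal is correct and follows essentially the same route as the paper, which justifies this Observation by noting that \lejeu\ on a disconnected graph is exactly the disjoint sum of the games on its connected components and then invoking Theorem~\ref{thm:sprgru}. Your only addition is making the edge-within-a-component argument and the induction on $k$ explicit, which the paper leaves implicit.
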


\section{Relation with non-attacking rooks on a chessboard}
\label{sec:tours}

Inspired by the non-attacking queens game~\cite{noon2006non}, we introduce the non-attacking rooks game. It is played on an $n\times m$ chessboard $\mathcal C$. There is a subset $\mathcal{H}$ of the squares of the chessboard whose elements are named \textit{holes}.
At each turn, the current player places a rook on a square of the chessboard that is not a hole in such a way that it does not attack any of the already played rooks.
The rooks cannot 'jump over' holes. In other words, there can be two rooks on the same row provided there is a hole between them. The first player unable to play loses.

When the chessboard has no holes, the game is fully characterized by the parity of the minimum dimension of the grid. Indeed, at each turn a row and a column are deleted from the chessboard, and the game ends when there is no more row or column to play on. An example of such a game can be seen on Figure~\ref{fig:tours}.

\setchessboard{
	maxfield = h6,
	setpieces={Re5,Ra3},
	showmover=false
}

\begin{figure}[!ht]
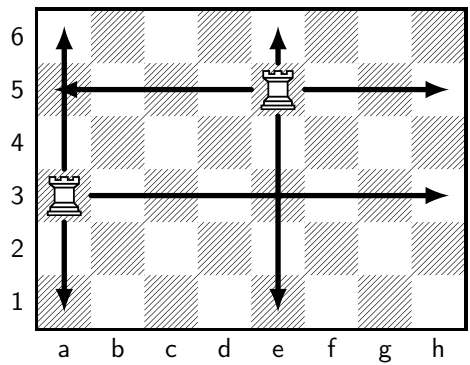

	\centering
	\chessboard[
	pgfstyle=straightmove,
	shorten = 0.4em,
	shortenend = -0.2em,
	backmove = {e5-e1,e5-e6,e5-a5,e5-h5,a3-a1,a3-a6,a3-h3}
	]
	\caption{On a $6 \times 8$ chessboard without holes the game always end after six moves and the second player wins.}
	\label{fig:tours}
\end{figure}

We prove that the non-attacking rooks game can be viewed a special case of \lejeu, played on a certain graph.

\begin{proposition}
	Every position of the non-attacking rooks game has an equivalent position in \lejeu. 
\end{proposition}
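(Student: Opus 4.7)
The plan is to exhibit an explicit bijective construction: given a chessboard $\mathcal{C}$ with holes $\mathcal{H}$, build a weighted graph on which WAK has exactly the same game tree as the non-attacking rooks game on $\mathcal{C}$.

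First, I would decompose each row into its maximal blocks of consecutive non-hole squares, calling these \emph{row segments}, and similarly define \emph{column segments}. Two non-hole squares of the same row attack each other if and only if they lie in the same row segment (since holes block attacks), and likewise for columns. Now I would construct a bipartite graph $G = (R \cup C, E, \omega)$ where $R$ is the set of row segments, $C$ is the set of column segments, and for every non-hole square $s$ lying in row segment $r \in R$ and column segment $c \in C$, I put an edge $(r,c)$ in $E$. Every vertex receives weight $\omega(v) = 1$.

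The key step is to check that placing a rook on square $s$ corresponds exactly to selecting the edge $e_s = (r,c)$ in WAK. Placing a rook on $s$ forbids any future rook in the row segment $r$ or in the column segment $c$ (but allows rooks on the other segments of the same row or column, which is correct since holes separate them). In the WAK position, playing $e_s$ drops $\omega(r)$ and $\omega(c)$ to $0$, which forbids exactly the edges incident to $r$ or $c$, i.e., exactly the squares of the row segment $r$ and column segment $c$. Thus the set of legal moves after playing $s$ in the chessboard is the same as the set of legal moves after playing $e_s$ in the graph.

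By induction on the length of the game, one obtains an outcome-preserving bijection between game trees, so the two games are equivalent and in particular have the same outcome. I do not anticipate a genuine obstacle: the only subtlety is handling the case where a non-hole row segment or column segment consists of a single square that is the unique square of its segment on both sides, which produces a degree-one vertex in $G$ but poses no issue for the argument. The whole proof is essentially the verification that the segment/square incidence structure of $\mathcal{C}\setminus\mathcal{H}$ is a bipartite graph whose WAK dynamics mirror the rook dynamics move for move.
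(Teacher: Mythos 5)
Your proof is correct, and its core idea coincides with the paper's: encode the row-structure and the column-structure of the board as the two sides of a graph, let a rook placement correspond to playing an edge, and observe that the exact square within the chosen row/column piece is irrelevant. The difference is in the choice of decomposition. You take the finest one --- maximal row segments and column segments between holes, every vertex of weight $1$ --- so each non-hole square becomes its own edge (a row segment and a column segment meet in at most one square, which is what makes your square--edge correspondence a bijection), and the resulting position is in fact an \arck\ position whose size is the number of non-hole squares. The paper instead works with arbitrary \emph{rectangle covers} (of which your segment decomposition is the example it mentions, ``pieces of rows or columns between the holes''), assigning to each vertical rectangle the number of its free columns and to each horizontal rectangle the number of its free rows; the extra verification it needs is that positive weights on the endpoints of an existing edge guarantee a free square in the rectangles' intersection. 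What your version buys is a cleaner, move-for-move game-tree isomorphism with no weight bookkeeping; what the paper's generality buys is compression: many columns or rows collapse into a single vertex of large weight, which is precisely what turns, e.g., a board with a rectangular hole at its border into a small weighted tree of depth $2$ that Theorem~\ref{thm:mainResult} can then solve, whereas your encoding of the same board would be a large unit-weight graph. One small point to make explicit: the proposition speaks of \emph{every} position, possibly with rooks already placed; in your setting this is handled by deleting the (at most one rook per segment) row and column segments that already contain a rook, or equivalently by your induction, since any legal configuration of non-attacking rooks is reachable from the empty board --- a remark worth one sentence, but not a gap.
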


\begin{proof}
	
	We define a \textit{vertical rectangle cover} of the chessboard as a set $\mathcal R_V$ of rectangles on the chessboard, such that no two rectangles intersect, no rectangle contains holes, the union of the rectangles contains all of the squares of the chessboard that are not holes, and all the squares directly above or below each rectangle are either holes or outside the board.
	
	
	We can similarly define a \textit{horizontal rectangle cover} where the squares at the left and right border are either holes or outside the chessboard. An example of such covers can be seen on Figure~\ref{fig:rectCover}.
	We can always find such rectangle covers, for example by taking pieces of rows or columns between the holes.
	
	Consider a position of the non-attacking rooks game, and let $\mathcal{R}_V$ and $\mathcal{R}_H$ be respectively a vertical and horizontal rectangle cover of the board. Let $G=(V,E, \omega)$ be the weighted graph built as follows: 
	\begin{itemize}
		\item for each vertical rectangle $V_i$, add a vertex $v_i$ with weight the number of columns of $V_i$ minus the number of rooks already present in $V_i$, 
		\item for each horizontal rectangle $H_j$, add a vertex $h_j$ with weight the number of rows of $H_j$ minus the number of rooks in $H_j$,
		\item if two rectangles $V_i$ and $H_j$ intersect, then add an edge between $v_i$ and $h_j$.
	\end{itemize}

	
	
	Each time a player places a rook on the chessboard, it is inside exactly one vertical rectangle $V_i$ and one horizontal rectangle $H_j$. Since it is forbidden to attack this rook, it is equivalent to remove a column from $V_i$ and a line from $H_j$. So the graph obtained from this new position is the one in which the weights of $v_i$ and $h_j$ are decreased by one. The position of the rook in the intersection of $V_i$ and $H_j$ does not matter.
	
	Moreover, in a graph $G$ associated with a position of the non-attacking rooks game, for each move on an edge $(v_i,h_j)$, the weights of the vertices are positives so the corresponding rectangles have a positive number of line or columns. Moreover, since the edge $(v_i,h_j)$ exists, the rectangles intersect. By construction, the intersection of $V_i$ and $H_j$ is a rectangle with the same number of columns as $V_i$, and the same number of rows as $H_j$. Additionally, there are exactly $\omega(v_i)$ free columns, i.e., not occupied by previously played rooks, and $\omega(h_j)$ free rows. Consequently, it is possible to place a rook in the intersection of $V_i$ and $H_j$.
	
	Denote by $f$ the construction above that transforms a position $G$ for the rook placement game into a position $f(G)$ for \lejeu. For each move from $G$ to $G'$ for the non-attacking rooks game, there is an equivalent move from $f(G)$ to $f(G')$. Conversely, for every move from $f(G)$ to some position $G_1$ of \lejeu, there is an equivalent move from $G$ to some position $G' \in f^{-1}(G_1)$. Consequently, the two games are equivalent.

\end{proof}

\begin{figure}[t]
	\centering
	\begin{tikzpicture}
	\node (v) at (0,0) {
		\scalebox{0.5}{
			\begin{tikzpicture}[scale = 0.72]
			\draw[very thick] (0,0) rectangle (12,10);
			\draw[fill = black] (3,5) rectangle (6,7);
			\draw[fill = black] (4,4) rectangle (6,5);
			\draw[fill = black] (4,3) rectangle (8,4);
			\draw[fill = black] (10,4) rectangle (12,6);
			
			\draw (0.1,0.1) rectangle (2.9,9.9) node[midway]{$V_1$};
			\draw (3.1,0.1) rectangle (3.9,4.9) node[midway]{$V_2$};
			\draw (3.1,7.1) rectangle (5.9,9.9) node[midway]{$V_3$};
			\draw (4.1,0.1) rectangle (7.9,2.9) node[midway]{$V_4$};
			\draw (6.1,4.1) rectangle (7.9,9.9) node[midway]{$V_5$};
			\draw (8.1,0.1) rectangle (9.9,9.9) node[midway]{$V_6$};
			\draw (10.1,0.1) rectangle (11.9,3.9) node[midway]{$V_7$};
			\draw (10.1,6.1) rectangle (11.9,9.9) node[midway]{$V_8$};
			
			\end{tikzpicture}	
		}
	};
	
	\node (h) at (4.8,0) {
		\scalebox{0.5}{
			\begin{tikzpicture}[scale = 0.72]
			\draw[very thick] (0,0) rectangle (12,10);
			\draw[fill = black] (3,5) rectangle (6,7);
			\draw[fill = black] (4,4) rectangle (6,5);
			\draw[fill = black] (4,3) rectangle (8,4);
			\draw[fill = black] (10,4) rectangle (12,6);
			
			\draw (0.1,7.1) rectangle (11.9,9.9) node[midway]{$H_1$};
			\draw (0.1,5.1) rectangle (2.9,6.9) node[midway]{$H_2$};
			\draw (6.1,6.1) rectangle (11.9,6.9) node[midway]{$H_3$};
			\draw (6.1,4.1) rectangle (9.9,5.9) node[midway]{$H_4$};
			\draw (0.1,3.1) rectangle (3.9,4.9) node[midway]{$H_5$};
			\draw (8.1,3.1) rectangle (11.9,3.9) node[midway]{$H_6$};
			\draw (0.1,0.1) rectangle (11.9,2.9) node[midway]{$H_7$};
			
			\end{tikzpicture}	
		}
	};
	
	\node (g) at (10.5,0) {
		\scalebox{0.6}{
			\begin{tikzpicture}[scale = 1]
			\node[vertex] (h2) at (3,3.2) {$H_2$};
			\node[vertex] (h1) at (1.5,0) {$H_1$};
			\node[vertex] (h3) at (3,0) {$H_3$};
			\node[vertex] (h4) at (4.5,0) {$H_4$};
			\node[vertex] (h5) at (5.5,0) {$H_5$};
			\node[vertex] (h6) at (6.8,0) {$H_6$};
			\node[vertex] (h7) at (9,0) {$H_7$};
			
			\node[vertex] (v1) at (4.5,4) {$V_1$};
			\node[vertex] (v2) at (6.5,1.5) {$V_2$};
			\node[vertex] (v3) at (1,-2) {$V_3$};
			\node[vertex] (v4) at (9.5,-2) {$V_4$};
			\node[vertex] (v5) at (3.5,1.7) {$V_5$};
			\node[vertex] (v6) at (4.5,-4) {$V_6$};
			\node[vertex] (v7) at (7.5,-0.7) {$V_7$};
			\node[vertex] (v8) at (2.5,-0.7) {$V_8$};
			
			\draw (h1)--(v1);
			\draw (h1)--(v3);
			\draw (h1)--(v5);
			\draw (h1)--(v6);
			\draw (h1)--(v8);
			
			\draw (h2)--(v1);
			
			\draw (h3)--(v5);
			\draw (h3)--(v6);
			\draw (h3)--(v8);
			
			\draw (h4)--(v5);
			\draw (h4)--(v6);
			
			\draw (h5)--(v1);
			\draw (h5)--(v2);
			
			\draw (h6)--(v6);
			\draw (h6)--(v7);
			
			\draw (h7)--(v1);
			\draw (h7)--(v2);
			\draw (h7)--(v4);
			\draw (h7)--(v6);
			\draw (h7)--(v7);
			
			\end{tikzpicture}	
		}
	};
	
\end{tikzpicture}
\caption{\label{fig:rectCover} Vertical and horizontal rectangles cover and the graph associated.}

\end{figure}
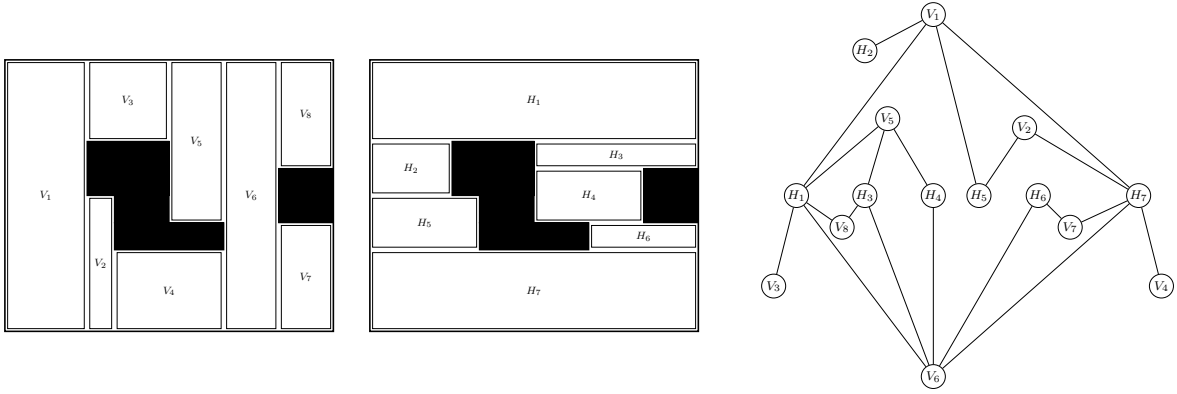

\begin{figure}[!ht]
\centering
\scalebox{0.7}{
	
	\begin{tikzpicture}

	\node (b) at (8.5,0) {
		\scalebox{0.9}{
			\begin{tikzpicture}[scale = 0.72]
			\draw (0,0) rectangle (10,7);
			\draw[fill = black] (3,4) rectangle (7,7);
			\draw[<->, thick] (0.1,7.2) -- (2.75,7.2) node[midway,above]{$n_1$};
			\draw[<->, thick] (7.25,7.2) -- (9.9,7.2) node[midway,above]{$n_2$};
			\draw[<->, thick] (2.8,4) -- (2.8,6.9) node[midway,left]{$h_1$};
			\draw[<->, thick] (7.2,4) -- (7.2,6.9) node[midway,right]{$h_2$};
			\draw[<->, thick] (0.2,0.1) -- (0.2,3.8) node[midway,right]{$m_1$};
			\draw[<->, thick] (3,3.8) -- (7,3.8) node[midway,below]{$w_1$};
			
			\draw[dotted] (2.8,0) -- (2.8,4) ;
			\draw[dotted] (7.2,0) -- (7.2,4) ;
			\draw[dotted] (0,3.8) -- (3,3.8) ;
			\draw[dotted] (7,3.8) -- (10,3.8) ;
			
			\end{tikzpicture}
		}
	};
	
	\node (d) at (17,0) {
		
		\scalebox{1.35}{
			
			\begin{tikzpicture}   
			
			\node[vertex] (H1) at (0,0) {$h_1$};
			\node[vertex] (N1) at (1,0) {$n_1$};
			\node[vertex] (N2) at (3,0) {$n_2$};
			\node[vertex] (H2) at (4,0) {$h_2$};
			\node[vertex] (M1) at (2,-1) {$m_1$};
			\node[vertex] (W1) at (2,-2) {$w_1$};        
			
			\draw (H1) -- (N1);
			\draw (H2) -- (N2);
			\draw (W1) -- (M1);
			\draw (M1) -- (N1);
			\draw (M1) -- (N2);      
			
			\end{tikzpicture}
		}        
	};
	
	\draw (12.9,0) node[scale=3] {$\Leftrightarrow$};
\end{tikzpicture}

}
\caption{Reduction when there is a hole by the edge of the chessboard.}
\label{fig:tours_bord}
\end{figure}
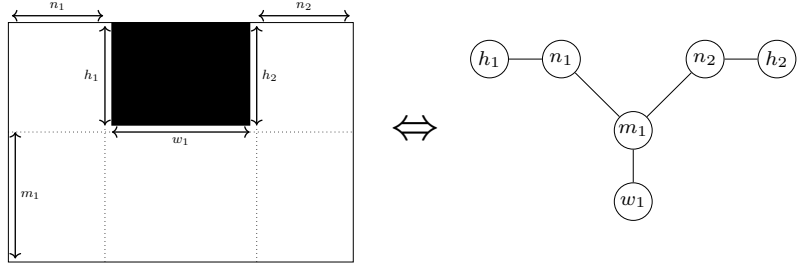

From the proof of the proposition above, we can see that the exact square on which a rook is placed is not important. Indeed, what really matters is in which area it is placed. For example, in the case of a rectangular hole by the edge of the chessboard, as in Figure~\ref{fig:tours_bord}, there are $5$ different rectangular areas: top left, top right, bottom left, bottom right, and bottom center. Consequently, there are only $5$ types of moves in this case, and the only thing that a player has to choose is in which area he will play. In this case, the equivalent \lejeu\ position is a tree of depth 2, which is solved by Theorem~\ref{thm:mainResult}. The more general case with a hole in the middle of the board seems more complicated.


The rest of paper will now be dedicated to study properties of \lejeu .

\section{Canonical Graphs}
\label{sec:first}

The notion of canonical graphs comes from the observation that some vertices have little influence on the outcome of the game. The following definition identifies these vertices. Given a graph $G(V,E,\omega)$ and a vertex $u\in V$, we define the \emph{neighbourhood} of $u$, denoted by $N(u)$, as the set $\{v \in V | (u,v) \in E\}$.

\begin{definition}
Let $G=(V,E,\omega)$ be a weighted graph. We define the following:
\begin{itemize}[noitemsep, topsep=0pt]
\item A vertex $u \in V$ is \emph{useless} if $u$ has no loop attached to it, and all the neighbors of $u$ have a loop attached to them.
\item A vertex $u \in V$ is \emph{heavy} if $u$ has no loop attached to it, and $\omega(u) \geq \sum_{v \in N(u)} \omega(v)$.
\item Two non-adjacent vertices $u$ and $v$ are \emph{false twins} if $N(u) = N(v)$, and $\hasLoop(u) = \hasLoop(v)$.
\end{itemize}
\end{definition}

The idea is that whenever there is a useless vertex, a heavy vertex, or two false twins, the graph can be simplified. This simplification is described in the definition below. It is illustrated on Figure~\ref{fig:reduction}.


\begin{definition}
Let $G=(V,E,\omega)$ be a weighted graph. A \emph{reduction} of $G$ is a graph $G'$ obtained by applying any arbitrary sequence of the following steps:
\begin{itemize}[noitemsep, topsep=0pt]
\item Deleting a useless vertex $u$;
\item Deleting a heavy vertex $u$ and attaching a loop to each of its neighbors;
\item Merging two false twins $v_1$ and $v_2$ into a single vertex $v$ with weight $\omega(v_1) + \omega(v_2)$.
\end{itemize}
\end{definition}

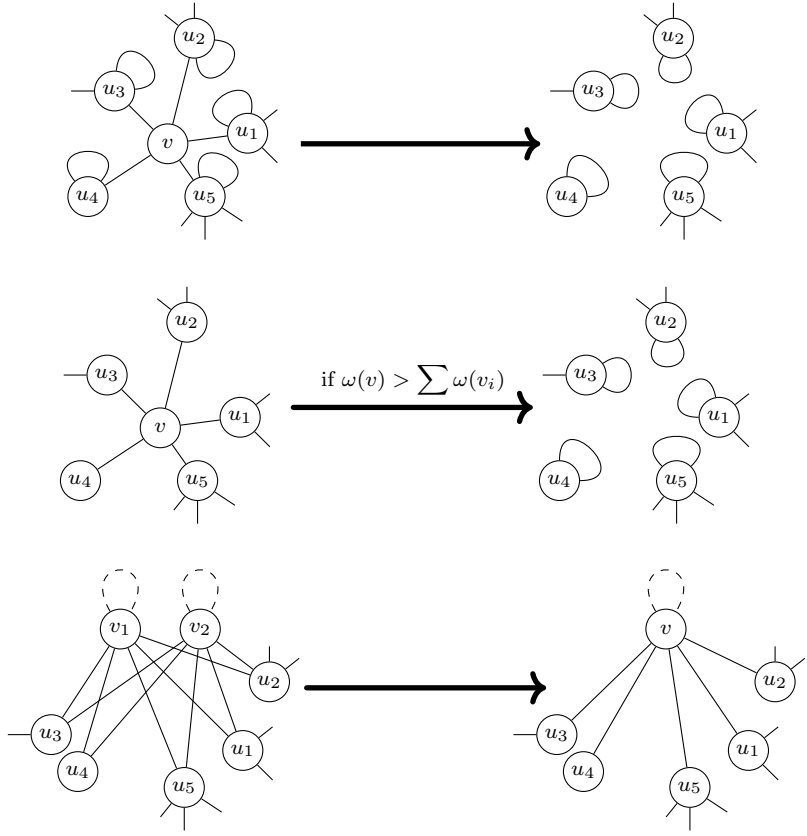
\begin{figure}[!ht]
\centering
\begin{tikzpicture}
\node[] (b) at (0,0) {
\begin{tikzpicture}[scale=0.7]

\node[vertex] (v) at (0,0) {$v$} ;

\node[vertex] (u1) at (1.5,0.2) {$u_1$} ;
\node[vertex] (u2) at (0.5,2) {$u_2$} ;
\node[vertex] (u3) at (-1,1) {$u_3$} ;
\node[vertex] (u4) at (-1.5,-1) {$u_4$} ;
\node[vertex] (u5) at (0.7,-1) {$u_5$} ;

\foreach \i in {1,2,3,4,5} {
	\draw (v) -- (u\i) ;
}

\foreach \j/\pos/\dist in {
	1/below right/0.2cm and 0.2cm, 
	1/above right/0.1cm and 0.2cm, 
	2/above/0.2cm, 
	2/above left/0.1cm and 0.2cm, 
	3/left/0.3cm,
	5/below left/0.2cm and 0.1cm,
	5/below/0.3cm,
	5/below right/0.1cm and 0.3cm
} {
	\node[\pos= \dist of u\j] (temp) {} ;
	\draw (temp) -- (u\j) ;
}

\foreach \i/\in\out in {1/160/80,2/-5/-85,3/85/5,4/50/130,5/25/105} {
	\draw[looseness=5] (u\i) edge[in=\in, out=\out] (u\i) ;
}

\node[vertex] (u1) at (9+1.5,0.2) {$u_1$} ;
\node[vertex] (u2) at (9+0.5,2) {$u_2$} ;
\node[vertex] (u3) at (9-1,1) {$u_3$} ;
\node[vertex] (u4) at (9-1.5,-1) {$u_4$} ;
\node[vertex] (u5) at (9+0.7,-1) {$u_5$} ;

\foreach \j/\pos/\dist in {
	1/below right/0.2cm and 0.2cm, 
	1/above right/0.1cm and 0.2cm, 
	2/above/0.2cm, 
	2/above left/0.1cm and 0.2cm, 
	3/left/0.3cm,
	5/below left/0.2cm and 0.1cm,
	5/below/0.3cm,
	5/below right/0.1cm and 0.3cm
} {
	\node[\pos= \dist of u\j] (temp) {} ;
	\draw (temp) -- (u\j) ;
}

\foreach \i/\in\out in {1/180/100,2/-54/-120,3/35/-35,4/0/90,5/45/135} {
	\draw[looseness=5] (u\i) edge[in=\in, out=\out] (u\i) ;
}

\draw[->, line width = 2pt] (2.5,0) -- (7,0) ;
\end{tikzpicture}
} ;

\node[below = 0cm of b] (c) {

\begin{tikzpicture}[scale =.7]

\node[vertex] (v) at (0,0) {$v$} ;

\node[vertex] (u1) at (1.5,0.2) {$u_1$} ;
\node[vertex] (u2) at (0.5,2) {$u_2$} ;
\node[vertex] (u3) at (-1,1) {$u_3$} ;
\node[vertex] (u4) at (-1.5,-1) {$u_4$} ;
\node[vertex] (u5) at (0.7,-1) {$u_5$} ;

\foreach \i in {1,2,3,4,5} {
	\draw (v) -- (u\i) ;
}

\foreach \j/\pos/\dist in {
	1/below right/0.2cm and 0.2cm, 
	1/above right/0.1cm and 0.2cm, 
	2/above/0.2cm, 
	2/above left/0.1cm and 0.2cm, 
	3/left/0.3cm,
	5/below left/0.2cm and 0.1cm,
	5/below/0.3cm,
	5/below right/0.1cm and 0.3cm
} {
	\node[\pos= \dist of u\j] (temp) {} ;
	\draw (temp) -- (u\j) ;
}

\node[vertex] (u1) at (9+1.5,0.2) {$u_1$} ;
\node[vertex] (u2) at (9+0.5,2) {$u_2$} ;
\node[vertex] (u3) at (9-1,1) {$u_3$} ;
\node[vertex] (u4) at (9-1.5,-1) {$u_4$} ;
\node[vertex] (u5) at (9+0.7,-1) {$u_5$} ;

\foreach \j/\pos/\dist in {
	1/below right/0.2cm and 0.2cm, 
	1/above right/0.1cm and 0.2cm, 
	2/above/0.2cm, 
	2/above left/0.1cm and 0.2cm, 
	3/left/0.3cm,
	5/below left/0.2cm and 0.1cm,
	5/below/0.3cm,
	5/below right/0.1cm and 0.3cm
} {
	\node[\pos= \dist of u\j] (temp) {} ;
	\draw (temp) -- (u\j) ;
}

\foreach \i/\in\out in {1/180/100,2/-54/-120,3/35/-35,4/0/90,5/45/135} {
	\draw[looseness=5] (u\i) edge[in=\in, out=\out] (u\i) ;
}

\draw[->, line width = 2pt, line cap = round] (2.5,0) -- (7,0) node [midway, above] {if $\omega(v) > \sum \omega(v_i)$} ;
\end{tikzpicture}
} ;

\node[below = 0cm of c] (a) {
\begin{tikzpicture}[scale =.7]

\node[vertex] (v1) at (-0.5-0.5,3) {$v_1$} ;
\node[vertex] (v2) at (1-0.5,3) {$v_2$} ;

\node[vertex] (u1) at (1.8-0.5,0.7) {$u_1$} ;
\node[vertex] (u2) at (2.3-0.5,2) {$u_2$} ;
\node[vertex] (u3) at (-1.8-0.5,1) {$u_3$} ;
\node[vertex] (u4) at (-1.3-0.5,0.3) {$u_4$} ;
\node[vertex] (u5) at (0.7-0.5,0) {$u_5$} ;

\foreach \i in {1,2,3,4,5} {
	\draw (v1) -- (u\i) ;
	\draw (v2) -- (u\i) ;
}

\dashedloop{v1} ;
\dashedloop{v2} ;

\foreach \j/\pos/\dist in {
	1/below right/0.2cm and 0.2cm, 
	1/above right/0.1cm and 0.2cm, 
	2/above/0.2cm, 
	2/above right/0.1cm and 0.2cm, 
	3/left/0.3cm,
	5/below left/0.2cm and 0.1cm,
	5/below/0.3cm,
	5/below right/0.1cm and 0.3cm
} {
	\node[\pos= \dist of u\j] (temp) {} ;
	\draw (temp) -- (u\j) ;
}

\node[vertex] (v) at (9.25,3) {$v$} ;

\node[vertex] (u1) at (9+ 1.8,0.7) {$u_1$} ;
\node[vertex] (u2) at (9+ 2.3,2) {$u_2$} ;
\node[vertex] (u3) at (9+ -1.8,1) {$u_3$} ;
\node[vertex] (u4) at (9+ -1.3,0.3) {$u_4$} ;
\node[vertex] (u5) at (9+ 0.7,0) {$u_5$} ;

\foreach \i in {1,2,3,4,5} {
	\draw (v) -- (u\i) ;
}

\foreach \j/\pos/\dist in {
	1/below right/0.2cm and 0.2cm, 
	1/above right/0.1cm and 0.2cm, 
	2/above/0.2cm, 
	2/above right/0.1cm and 0.2cm, 
	3/left/0.3cm,
	5/below left/0.2cm and 0.1cm,
	5/below/0.3cm,
	5/below right/0.1cm and 0.3cm
} {
	\node[\pos= \dist of u\j] (temp) {} ;
	\draw (temp) -- (u\j) ;
}

\dashedloop{v} ;

\draw[->, line width = 2pt, line cap = round] (2.5,1.5) -- (7,1.5) ;
\end{tikzpicture}
} ;
\end{tikzpicture}

\caption{\label{fig:reduction} The three possible reductions for a graph. At the top, $v$ is a useless vertex. In the middle, $v$ is a heavy vertex with $\omega(v) \geq \sum \omega(u_i)$. At the bottom, two false twins are merged, and we have $\omega(v) = \omega(v_1) + \omega(v_2)$, and $\hasLoop(v) = \hasLoop(v_1) = \hasLoop(v_2)$.}
\end{figure}

A graph is \emph{canonical} if it has no useless vertices, no heavy vertices, and no false twins. The following lemma ensures that the reduction of the graph preserves its Grundy value (and thus its outcome).

\begin{proposition}
\label{prop:reduction}
If $G$ is a graph and $G'$ is a reduction of $G$, then $G \equiv G'$.
\end{proposition}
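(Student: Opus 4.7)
The plan is to exploit two reductions to the claim. First, by the transitivity of Grundy-equivalence and since any reduction is a finite composition of the three elementary operations, it suffices to handle the case where $G'$ is obtained from $G$ by a single elementary reduction. Second, using the characterization $G_1 \equiv G_2$ if and only if $\grundy(G_1) = \grundy(G_2)$ if and only if $G_1 + G_2$ is a $\outcomeP$-position, I will prove $G \equiv G'$ by exhibiting an explicit winning strategy for the second player in the disjoint sum $G + G'$.

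In each of the three cases the strategy follows the same mirroring principle: the second player answers each move of the first player by a well-chosen move in the opposite copy, preserving a weight invariant relating the current weights in $G$ and in $G'$. For the \emph{useless-vertex} case with $u$ removed, the invariant is that every vertex $x \neq u$ has the same current weight in $G$ and in $G'$; moves disjoint from $u$ are copied verbatim, a move on an edge $(u, v_i)$ in $G$ is answered by a move on the loop of $v_i$ in $G'$ (which exists by the definition of uselessness), and a move on the loop of $v_i$ in $G'$ is answered by the corresponding loop move in $G$. For the \emph{false-twins} case where $v_1, v_2$ are merged into $v$, the invariant is that the sum of the current weights of $v_1$ and $v_2$ in $G$ equals the current weight of $v$ in $G'$, with equality of current weights elsewhere; a move on $(x, v_i)$ or on the loop of $v_i$ in $G$ is answered by the corresponding move on $v$ in $G'$, while a move on $(x, v)$ or on the loop of $v$ in $G'$ is answered by playing on $v_1$ or on $v_2$ (whichever still has a counter), which is possible by the sum invariant.

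The main obstacle is the \emph{heavy-vertex} case. Again the invariant is equality of current weights on every vertex $x \neq u$, and the natural pairing answers a move on $(u, v_i)$ in $G$ with a move on the loop of $v_i$ in $G'$, and conversely. The delicate point is when the first player plays on the loop of $v_i$ in $G'$ where $v_i$ did \emph{not} originally carry a loop in $G$: the second player must then reply with the move $(u, v_i)$ in $G$, which requires the current weight of $u$ in $G$ to be at least $1$. Showing that $u$ never runs out of counters at such a moment is where the hypothesis $\omega(u) \geq \sum_{v_j \in N(u)} \omega(v_j)$ enters crucially. I would argue by contradiction: if the current weight of $u$ in $G$ were $0$ when the second player is called, then every one of $u$'s counters has been consumed by some move on an edge $(u, v_j)$, so the total number of such moves equals $\omega(u) \geq \sum_j \omega(v_j)$; combined with the trivial upper bound that the number of moves on $(u, v_j)$ cannot exceed $\omega(v_j)$, this would force each $v_j$ to have been completely depleted by $(u, v_j)$-moves alone, hence to have current weight $0$ in $G$ and, by the invariant, in $G'$, contradicting the fact that the first player just legally played on the loop of $v_i$ in $G'$.
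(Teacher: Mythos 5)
Your proof is correct, but it takes a genuinely different route from the paper's. The paper argues by induction on the total weight of $G$: for a single reduction step it matches every option $G_e$ of $G$ with an option $G'_{e'}$ of $G'$ (and conversely) that is again related by the same reduction (a useless vertex stays useless, a heavy vertex stays heavy, twins stay twins after a move), and concludes $\grundy(G)=\grundy(G')$ from the $\mex$ recursion. You instead fix one elementary operation and exhibit an explicit pairing strategy for the second player in the sum $G+G'$, maintaining a weight invariant between the two copies, and then pass from ``$G+G'$ is a $\outcomeP$-position'' to $G\equiv G'$ via the Sprague--Grundy theorem. I checked your case analysis: in each case the designated reply is legal whenever the opponent's move was (legality follows from the invariant together with the legality of the move just played), and the only delicate point is indeed the heavy-vertex case, where your counting argument is sound: if $u$ were empty, all $\omega(u)\ge\sum_{v_j\in N(u)}\omega(v_j)$ of its decrements came from edges $(u,v_j)$, each also decrementing some $v_j$, so every neighbour of $u$ would already be empty in $G$, contradicting (via the invariant) the legality of the loop move just played on $v_i$ in $G'$. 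As for what each approach buys: the paper's induction stays at the level of Grundy values, reuses the reduction machinery inside the induction, and yields $G\equiv G'$ directly from the $\mex$ definition without invoking Sprague--Grundy; your mirroring argument is more elementary and self-contained at the level of strategies, avoids the induction on weights, and isolates exactly where the hypothesis $\omega(u)\ge\sum_{v\in N(u)}\omega(v)$ is used, at the cost of appealing to the standard equivalence between ``$G+G'$ is a $\outcomeP$-position'' and Grundy-equivalence (Theorem~\ref{thm:sprgru}).
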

\begin{proof}
We will prove that the none of the three reduction operations changes the Grundy value of the graph. Let $G$ be a weighted graph, and $G'$ be a graph obtained by applying only one reduction operation on~$G$. The proof is by induction on the total sum of all the weights of $G$. If all the weights of $G$ are zero, then clearly their is no move on either $G$ or $G'$, and both games have Grundy value $0$.

Suppose by induction that the property holds for all graphs $G$ with total weight at most $k$. Let $G = (V,E,\omega)$ be a graph with total weight $k+1$, and let $G'$ be a graph obtained from $G$ by applying one step of the reduction rules.

In the following, given an edge $e$ of $G$ (resp. $G'$) such that both its endpoints have positive weights, we denote by $G_e$ (resp. $G'_e$) the graph obtained from $G$ (resp. $G'$) by playing $e$.
We will prove the two following points:
\begin{enumerate}[label=(\alph*)]
\item For every option $G_e$ of $G$, there is an option $G'_e$ of $G'$ such that $G_e \equiv G'_e$.
\item For every option $G'_e$ of $G'$, there is an option $G_e$ of $G$ such that $G_e \equiv G'_e$.
\end{enumerate}
By definition of the grundy value as the $\mex$ of the options' values, these two properties imply that~$G \equiv G'$.

First, consider the case where $e$ is an edge in both $G$ and $G'$. Then we have $G_e \equiv G'_e$. Indeed, suppose that we can apply a certain reduction operation on $G$, then the same reduction can be applied on $G_e$ since:
\begin{itemize}
\item a vertex that is useless in $G$ is also useless in $G_e$;
\item a vertex $v$ that is heavy in $G$ is also heavy in $G_e$, since any move that decrease the weight of $v$ also decrease the weight of one of its neighbors;
\item if two vertices are false twins in $G$, then they are also false twins in $G'$.
\end{itemize}
Hence, we can consider the graph obtained from $G_e$ by applying the same reduction operation as in $G$. Since $e$ is also an edge of $G'$, we can check easily that this graph is equal to $G'_e$. The total weight of $G_e$ is at most $k$, and by applying the induction hypothesis on $G_e$, we obtain $G_e \equiv G'_e$. 

Consequently, we only need to consider the options $G_e$ and $G'_{e'}$ where $e$ is not an edge of $G'$, and $e'$ is not an edge of $G$. For each of the three possible reduction rules, we will check that the two properties hold for these remaining options:

\begin{enumerate}[label=\textbf{Case \arabic*:},itemindent=*,leftmargin=0em, itemsep=1ex, listparindent=\parindent]
\item $G'$ is obtained from $G$ by deleting $v$, a useless vertex. 

Let $e$ be an edge of $G$, and consider the option $G_e$ of $G$. By the observation above, we can assume that $e$ is not an edge of $G'$, and consequently, we can write $e = (v,u)$ for some vertex $u \in N(v)$. Since $v$ is a useless vertex, we have $u \neq v$, and there is a loop on $u$. Let $e'$ be the edge $(u,u)$ in $G'$. Then $G'_{e'}$ is equal to the graph obtained from $G_e$ by removing the (useless) vertex $v$, and by applying the induction hypothesis on $G_e$, we have $G_e \equiv G'_{e'}$.

If $G'_{e'}$ is an option of $G'$, then $e'$ is also an edge of $G$, and this case is already handled by the observation above.

\item $G'$ is obtained from $G$ by deleting a heavy vertex $v$, and attaching loops to its neighbors.

Let $e$ be an edge of $G$, and consider the option $G_e$ of $G$ obtained by playing $e$. We can assume that $e$ is not an edge of $G'$, and consequently $e$ is incident to $v$ and we can write $e=(v,u)$ for some vertex $u \in N(v)$. Let $e'$ be the edge $(u,u)$ of $G'$. Then $G'_{e'}$ is the graph obtained from $G_e$ by simplifying the heavy vertex~$v$, and using the induction hypothesis on $G_e$, we have $G_e \equiv G'_{e'}$.

Consider now an option $G'_{e'}$ of $G'$ for some edge $e'$ of $G'$. We can assume that $e'$ is not an edge of $G$, and consequently $e' = (u,u)$ for a vertex $u$ adjacent to $v$ in $G$. Denote by $e$ the edge $(u,v)$ in $G$. Since playing $e'$ is a valid move in $G'$, we have that $\omega(u) > 0$, and since $v$ is a heavy vertex, $\omega(v) \geq \omega(u) > 0$. Consequently, playing $e$ on $G$ is also a valid move. Additionally, $v$ is still a heavy vertex of $G_e$, and $G'_{e'}$ is equal to the graph $G_e$ after the reduction of the heavy vertex $v$. Using the induction hypothesis on $G_e$, we have that $G'_{e'} \equiv G_{e}$.

\item $G'$ is obtained from $G$ by merging two false twins $v_1$ and $v_2$ into a single vertex $v$. 

First consider an option $G_e$ of $G$ for a certain edge $e$. We only need to consider the cases where $e$ is not in $G'$. If $e=(v_1, u)$, with $u$ a neighbor of $v_1$, consider the edge $e' = (v,u)$ of $G'$. Since playing $e$ on $G$ is a valid move, we have $\omega(u) > 0$ and $\omega(v_1) > 0$. By definition of the twin vertices reduction, we have $\omega(v) = \omega(v_1) + \omega(v_2) > 0$, hence playing $e'$ in $G'$ is a valid move. Additionally, we can check that $G'_{e'}$ is equal to the graph $G_e$ after merging the two twin vertices $v_1$ and $v_2$. As a consequence, using the induction hypothesis we have $G_e \equiv G'_{e'}$. If $e$ is the loop attached to $v_1$ in $G$, then by taking $e'$ the loop attached to $v$ in $G'$, and using a similar argument, we can show that $G_e \equiv G'_{e'}$.

Conversely, consider the option $G'_{e'}$ of $G'$ for some edge $e'$. We can assume that $e'$ is not an edge of $G$, and consequently $e = (v, u)$ for some vertex $u$. Since playing $e'$ on $G'$ is a valid move, one of $v_1$ or $v_2$ has a positive weight. Without loss of generality, assume $\omega(v_1)>0$. If $u \neq v$, then take $e = (v_1,u)$, otherwise, if $e'$ is the loop attached to $v$, take $e$ to be the loop attached to $v_1$. Then we can easily check that in both cases, playing $e$ on $G$ is a valid move. Additionally, $G'_{e'}$ is equal to the graph $G_e$ after reducing the two false twin vertices $v_1$ and $v_2$. By applying the induction hypothesis on $G_e$, we get that $G_e \equiv G'_{e'}$.
\end{enumerate}

Hence in all three cases, the properties holds, and consequently we have $G \equiv G'$. This ends the induction step and proves the proposition. 
\end{proof}

If a graph $G$ is not canonical, we can take a canonical reduction $G'$ of $G$. By Proposition~\ref{prop:reduction}, $G'$ has the same Grundy value as $G$, and we can study $G'$ instead. This allows to simplify the study of $G$ in many cases. In particular, Proposition~\ref{prop:reduction} gives a straightforward solution when $G$ is a star. Indeed, in this case, all the leaves are false twins and can be merged together without changing the Grundy value. The resulting graph only contains two adjacent vertices without loops.

Another simple consequence of Proposition~\ref{prop:reduction} is the following result:

\begin{corollary}
\label{cor:arckiswak}
Let $G=(V,E,\omega)$ be a weighted graph. There is a graph $G'$ such that the Grundy value of \lejeu\ on $G$ is the same as the Grundy value of \arck\ on $G'$. 
\end{corollary}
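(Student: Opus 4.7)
The plan is to construct $G'$ by reversing, one at a time, the reduction operations of Proposition~\ref{prop:reduction}. Since the Grundy-equivalence furnished by that proposition is symmetric, it suffices to exhibit a graph that reduces back to $G$ in several steps and whose underlying structure is loopless with unit weights everywhere; all intermediate graphs will then share the same Grundy value.

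The first stage eliminates loops. For every vertex $u$ carrying a loop in $G$, I delete the loop and attach a fresh pendant vertex $v_u$ of weight $\omega(u)$ whose only neighbor is $u$. Because $N(v_u)=\{u\}$ and $\omega(v_u)=\omega(u)$, the pendant $v_u$ satisfies the heavy-vertex condition, and applying the heavy-vertex reduction to $v_u$ removes it and restores the loop on $u$, recovering $G$ exactly. Performing this for every looped vertex produces a loopless weighted graph $G_1\equiv G$.

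The second stage reduces weights to~$1$. For every vertex $u$ of $G_1$ with $\omega(u)=w\geq 2$, I replace $u$ by $w$ pairwise non-adjacent weight-$1$ copies $u^{(1)},\dots,u^{(w)}$, each inheriting the neighborhood of $u$; in particular, if $u$ was adjacent to some vertex $v$ of weight $w'$, the copies of $u$ and of $v$ are joined by a complete bipartite graph $K_{w,w'}$. Since none of these copies carries a loop, the $w$ copies of $u$ are false twins in the resulting graph $G'$, and iteratively merging them recovers $G_1$. By Proposition~\ref{prop:reduction}, $G'\equiv G_1\equiv G$. As $G'$ is loopless with all weights equal to~$1$, it is an \arck\ position, so the Grundy value of \lejeu\ on $G$ equals the Grundy value of \arck\ on $G'$.

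The argument is essentially careful bookkeeping; the only nontrivial point is checking that each added piece is genuinely a legal precursor of a single reduction step, which is immediate from the heavy-vertex condition for the pendants and from the matching-neighborhood condition for the split copies.
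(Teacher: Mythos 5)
Your proof is correct and follows essentially the same route as the paper: you invert the heavy-vertex and false-twin reductions of Proposition~\ref{prop:reduction} to obtain a loopless, unit-weight (hence \arck) position Grundy-equivalent to $G$. The only cosmetic differences are the order of the two stages and that your loop gadget is a weight-$\omega(u)$ pendant later split into a complete bipartite piece, whereas the paper attaches weight-$1$ heavy pendants after splitting; both verifications are sound.
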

\begin{proof}
The graph $G'$ is constructed as follows: for every vertex $u$ with weight $\omega(u) > 1$, replace $u$ by $\omega(u)$ vertices, each with weight $1$, and each with the same neighbors as $u$. Then, for every vertex $u$ such that there is a loop attached to $u$, remove the loop and create a new vertex $u'$ with weight~$1$ adjacent to $u$. We can remark that $G'$ is obtained by applying on each vertex the inverse of the simplification procedure for false twins and heavy vertices. Indeed, each vertex $u$ is split into $\omega(u)$ false twins, and each vertex $u'$ created from a loop is heavy. Removing the heavy vertices, and merging back all the false twins by applying the simplification procedure gives back the graph $G$.

As a consequence, $G$ is a reduction of $G'$ obtained by merging the false twins and removing the heavy vertices that we created. Using Proposition~\ref{prop:reduction}, $G$ and $G'$ have the same Grundy values.
Since there is no loop in $G'$, and all the vertices of $G'$ have weight $1$, \lejeu\ played on $G'$ is just an instance of \arck.
\end{proof}
The reduction from \lejeu\ to \arck\ is not polynomial. Indeed, a vertex $v$ is transformed into a number $\omega(v)$ of vertices, which is exponential in the size of the binary representation of~$\omega(v)$.

\section{Trees of depth at most 2}
\label{sec:trees}


In this section, we will give a characterization of the outcome for \lejeu\ when the graph is a tree with depth at most~$2$. We begin by analyzing simple cases before moving on to more complicated ones.
Since no confusion is possible, the vertices will be named after their weights.

Given an unweighted graph $G$, the \emph{set of positions} for $G$ is the set of all possible weight functions $\omega$, denoted $\pos(G)$. Given an order $v_1, \ldots , v_n$ on the vertices of $G$, a specific weight function $\omega$ will be denoted by the tuple $(\omega(v_1), \ldots, \omega(v_n))$.

\begin{lemma}
\label{lem:oneVertexLoop}
We have $\grundy\big(\scalebox{0.7}{\soloop{a}}\big)=a\bmod2$.
\end{lemma}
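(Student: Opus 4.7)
The plan is to argue directly by induction on $a$, using the fact that on a single vertex with a loop the game has a unique possible move at every step. By the convention fixed just before Observation~\ref{prop:playingOnANonConnectedGraph}, selecting a loop removes exactly one counter from its unique endpoint. Hence from the position $\soloop{a}$ with $a\geq 1$ the only option is $\soloop{a-1}$, while from $\soloop{0}$ there is no legal move.

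The base case $a=0$ is immediate: $\opt(\soloop{0})=\emptyset$, so $\grundy(\soloop{0})=\mex(\emptyset)=0=0\bmod 2$. For the inductive step, assume $\grundy(\soloop{a-1})=(a-1)\bmod 2$. Since the unique option of $\soloop{a}$ is $\soloop{a-1}$, we get
\[
\grundy\bigl(\soloop{a}\bigr)=\mex\bigl\{\,(a-1)\bmod 2\,\bigr\},
\]
which equals $1$ if $a$ is odd (the option has Grundy value $0$) and $0$ if $a$ is even (the option has Grundy value $1$). In both cases $\grundy(\soloop{a})=a\bmod 2$, closing the induction.

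There is no real obstacle here: because the graph has a single vertex and a single available edge, the game is isomorphic to a Nim heap of size $a$ where each move decreases the heap by exactly $1$, and the Grundy sequence of such a game is $0,1,0,1,\ldots$ by the mex rule. The only thing worth flagging is the convention on loops; with the alternative convention (removing two counters) one would instead obtain $\grundy(\soloop{a})=\lfloor a/2\rfloor\bmod 2$, but this lemma uses the paper's fixed convention.
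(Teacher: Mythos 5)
Your proof is correct and follows essentially the same route as the paper's: an induction on $a$ using the observation that the unique move from $\soloop{a}$ is to play the loop, reducing the weight by one. You merely spell out the base case and the mex computation more explicitly than the paper does.
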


\begin{proof}
The only available move is to play the loop. This decreases the number of counters on the vertex by 1, until it reaches 0. The result then holds by induction.
\end{proof}

\begin{lemma}
\label{lem:oneEdgeLoops}
The Grundy value of \scalebox{0.7}{\twoloops{a}{b}} is given by the formula:
$$\grundy\big( \scalebox{0.7}{\twoloops{a}{b}} \big)=((a+ b) \bmod 2) + 2 \times (\min(a,b) \bmod 2)$$
\end{lemma}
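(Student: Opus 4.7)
The plan is to prove the formula by strong induction on $a+b$. For the base case, when $\min(a,b) = 0$, say $b = 0$, both the edge and the loop on the second vertex become unplayable, leaving only the loop on the first vertex. By Lemma~\ref{lem:oneVertexLoop}, the Grundy value of this reduced position is $a \bmod 2$, which agrees with $((a+0) \bmod 2) + 2 \cdot (0 \bmod 2) = a \bmod 2$.

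For the inductive step, suppose $a, b \geq 1$ and that the formula holds for all positions of strictly smaller total weight. Since the graph is symmetric in its two vertices, the Grundy value satisfies $\grundy(a,b) = \grundy(b,a)$, and the formula is manifestly symmetric, so I may assume $a \le b$, whence $\min(a,b) = a$. The three available moves produce the positions $(a-1, b)$, $(a, b-1)$, and $(a-1, b-1)$, all of strictly smaller total weight, so the induction hypothesis provides their Grundy values.

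I would then split into four cases according to the parities of $a$ and $b$, and in each case compute the three option values directly from the formula and verify that their $\mex$ equals the claimed value at $(a,b)$. For example, when $a$ and $b$ are both odd and $a < b$, the options have Grundy values $1$, $3$, and $0$, whose $\mex$ is $2 = ((a+b) \bmod 2) + 2 \cdot (\min(a,b) \bmod 2)$, as required. The only subtlety lies in computing the option $(a, b-1)$: one must distinguish the subcase $a = b$ (where $\min(a, b-1) = b-1$) from the subcase $a < b$ (where $\min(a, b-1) = a$), since the parity of the new minimum differs between them. This is the main bookkeeping obstacle, but once it is handled carefully each of the four parity cases reduces to a finite arithmetic check.
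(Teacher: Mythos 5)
Your proposal is correct and follows essentially the same route as the paper: induction on the total weight, the base case $\min(a,b)=0$ reduced to Lemma~\ref{lem:oneVertexLoop}, and the inductive step verified by computing the $\mex$ of the three options (the paper simply displays this verification as a table of values rather than spelling out the parity cases, and you correctly flag the $a=b$ versus $a<b$ distinction for the option $(a,b-1)$, which is indeed the only delicate point).
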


This is summarized in the table below where $m=min(a,b)$ and $M=max(a,b)$:

\begin{center}
\begin{tabular}{|c|c c|}
\hline
\backslashbox{m}{M} & even & odd \\ \hline
even & 0 & 1 \\
odd & 3 & 2 \\ \hline
\end{tabular}
\end{center}

\begin{proof}
If $a = 0$ or $b=0$, the vertex with weight zero can be removed from the graph. The resulting graph is just composed of a single loop, and the result follows from Lemma~\ref{lem:oneVertexLoop}. 

Let $a,b>0$. The Grundy values can be determined by induction as shown in Figure~\ref{table:twoloops}.

\begin{figure}[h]
\centering
\begin{tikzpicture}[scale=0.5]
\fill[gray!10!white] (-0.5, -0.5) rectangle (6,6) ;

\foreach \i/\j/\lab in {
0/0/0, 0/2/0, 0/4/0, 2/0/0, 4/0/0, 2/2/0, 
0/1/1, 1/0/1, 0/3/1, 0/5/1, 3/0/1, 5/0/1, 2/3/1, 3/2/1,
1/1/2, 1/3/2, 1/5/2, 3/1/2, 5/1/2, 3/3/2, 
1/2/3, 1/4/3, 2/1/3, 4/1/3
} {
\node at (\i, \j) {$\lab$} ;
}
\foreach \i in {
0, 1, 2, 3, 4, 5
} {
\node at (-1, \i) {$\i$} ;
\node at (\i, -1) {$\i$} ;
\draw[gray] (\i + 0.5, -0.5) -- (\i + 0.5, 6) ;
\draw[gray] (-0.5, \i + 0.5) -- (6, \i + 0.5) ;
}
\node[draw, inner sep = 0.5em, rounded corners = 1pt](a) at (2,3) { } ;
\draw[->] (a) -- (2,2.3) ;
\draw[->] (a) -- (1.3,3) ;
\draw[->] (a.south west) +(0.04,0.04) -- (1.3,2.3) ;

\node (a) at (-0.5,7) {$a$} ;
\node (b) at (7,-0.5) {$b$} ;

\draw[->] (-0.5,-0.5) -- (a) ;
\draw[->] (-0.5,-0.5) -- (b) ; 
\end{tikzpicture}

\caption[twoloops]{\label{table:twoloops} Table of grundy values for the graph \scalebox{0.7}{\twoloops{a}{b}} with different values for $a$ and $b$.}
\end{figure}
\end{proof}

If a graph is not connected, then by Observation~\ref{prop:playingOnANonConnectedGraph}, the Grundy value can be computed from the values of the connected components.
Since the graph in the following remark occurs several times in later proofs, we give an explicit characterization of its $\outcomeP$-positions. It is obtained from the two previous results.
\begin{remark}
\label{rk:sumOneLoppTwoLoops}
The graph \scalebox{0.7}{\soloop{a} $+$ \twoloops{b}{c}} is a $\outcomeP$-position if and only if one of the two following holds:
\begin{itemize}
\item $a, b$, and $c$ are even;
\item $a$ and $\max(b,c)$ are odd, and $\min(b,c)$ is even. 
\end{itemize}
\end{remark}

The two following proofs use the same argument. The idea is the following: for a fixed graph $G$, there are some ranges on the weights of the vertices for which the graph is not canonical, and by applying a reduction, the outcome of the graph can be computed by induction on the size of the graph. When the graph is canonical, we prove that a certain set $P \subseteq \pos(G)$ is the set of $\outcomeP$-position.
This argument is formalized in the following proposition. 
\begin{proposition}
\label{prop:genericProof}
Let $G$ be a graph, and $S \subseteq \pos(G)$ such that there is no move from a position not in $S$ to a position in $S$. Let $P$ be a subset of $S$, and assume that:
\begin{enumerate}[label=(\roman*)]
\item \label{it:noPtoP} There is no move from a position in $P$ to another position in $P$;
\item \label{it:PtonotSisN} From a position in $P$, any move to some position not in $S$ is a losing move.
\item \label{it:NtoP} From any position in $S \setminus P$, there is either a move to a position in $P$, or to a $\outcomeP$-position $s'$ not in $S$;
\end{enumerate}
Under these assumptions, a position $p$ in $S$ is a $\outcomeP$-position if and only if $p$ is in $P$.
\end{proposition}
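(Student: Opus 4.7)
The plan is to proceed by strong induction on the total weight $\sum_{v \in V} \omega(v)$ of a position $p \in S$, using Proposition~\ref{prop:outcomes} to translate between options and outcome classes. The base case, when $p$ admits no move, makes $p$ a $\outcomeP$-position vacuously; condition~(iii) forces $p \in P$ in this case, because any position in $S \setminus P$ would have to admit at least one move.

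For the inductive step I would first treat $p \in P$: the goal is to show every option of $p$ is an $\outcomeN$-position, so that $p$ itself is $\outcomeP$. Condition~(i) rules out options lying again in $P$. For an option $p' \in S \setminus P$, the induction hypothesis applies (since $p'$ has strictly smaller total weight) and yields that $p'$ is $\outcomeN$. For an option $p' \notin S$, condition~(ii) directly says the move is losing, i.e., $p'$ is $\outcomeN$. Every option is therefore $\outcomeN$, as required.

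Next, for $p \in S \setminus P$, I would invoke condition~(iii) to produce an option $p'$ of $p$ that is either in $P$ or is a $\outcomeP$-position outside $S$. In the former case the induction hypothesis yields that $p' \in P$ is $\outcomeP$; in the latter case $p'$ is $\outcomeP$ by direct hypothesis. Either way $p$ admits a $\outcomeP$-option, so $p$ is $\outcomeN$.

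No genuine obstacle is expected: the proposition is essentially a repackaging of the recursive definition of $\outcomeP$/$\outcomeN$-positions, specialized to the situation where one can delimit a set $S$ of ``interesting'' positions and describe their outcomes directly via the auxiliary set $P$. The hidden hypothesis that no move from outside $S$ leads back into $S$ is not needed in the induction itself, but plays a role in applications, where it guarantees that the outcomes of positions outside $S$ referenced in (ii) and (iii) can be determined independently (typically via the canonical-graph reductions of Proposition~\ref{prop:reduction}) before the proposition is invoked.
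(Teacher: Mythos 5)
Your proposal is correct and follows essentially the same argument as the paper: induction on the total weight, with the childless case handled via condition (iii), options of a position in $P$ shown to be $\outcomeN$ using (i), the induction hypothesis, and (ii), and positions in $S \setminus P$ shown to be $\outcomeN$ via the move provided by (iii). Your side remark that the hypothesis about moves from outside $S$ is not used in the induction itself is also consistent with the paper's proof, which never invokes it explicitly.
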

\begin{proof}
The proof is by induction on the sum of the weights. Let $p$ be a position for $G$ such that $p \in S$.
If there is no possible move from $p$, then $p$ is a $\outcomeP$-position and $p \in P$, by condition~$\ref{it:NtoP}$. Thus we can suppose that $p$ has a nonempty set of options.

If $p \not \in P$, then by condition~$\ref{it:NtoP}$, there is a move from $p$ to a position $p'$ such that either $p' \not \in S$, and $p'$ is a $\outcomeP$-position, or $p'\in P$, and $p'$ is a $\outcomeP$-position by applying the induction hypothesis on $p'$. In both cases, $p'$ is a $\outcomeP$-position, and consequently $p$ is an $\outcomeN$-position.

Suppose now that $p \in P$, 
and let $p' \in opt(p)$. By condition~$\ref{it:noPtoP}$, we know that $p' \not \in P$. If $p' \in S$, then $p'$ is an $\outcomeN$-position by induction hypothesis. If $p' \not \in S$, then using condition~\ref{it:PtonotSisN}, $p'$ is also an $\outcomeN$-position. Consequently, $p$ is a $\outcomeP$-position.
\end{proof}


\begin{lemma}
\label{lemma:P4withloops}
Suppose that the graphs on the left is canonical, then the following outcome-equivalences holds:
\begin{center}
\tikz[baseline=-4]{\node[vertex] (2) at (0,0) {$a$};\node[vertex] (1) at (1,0) {$b$};\node[vertex] (3) at (2,0) {$c$}; \node[vertex] (4) at (3,0) {$d$} ;\draw (4)--(3)--(1)--(2); \selfloop{2} ;} $\sim $ \soloop{\scriptstyle a+c}
\end{center}
\end{lemma}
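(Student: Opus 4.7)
The plan is to apply Proposition~\ref{prop:genericProof} with $S\subseteq\pos(G)$ the set of canonical weight functions (tuples $(a,b,c,d)$ with $a,b,c,d\geq 1$, $b<a+c$, $c<b+d$, and $d<c$), and with $P\subseteq S$ the subset where $a+c$ is even. Since $\grundy\bigl(\scalebox{0.7}{\soloop{a+c}}\bigr)=(a+c)\bmod 2$ by Lemma~\ref{lem:oneVertexLoop}, identifying $P$ with the $\outcomeP$-positions inside $S$ is exactly the outcome-equivalence claimed.

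The absorbing property of $S$ and condition~(i) are short verifications: weights only decrease, so a zero weight stays zero; a move-by-move check shows that the slacks $a+c-b$, $b+d-c$, $c-d$ never strictly increase, so a heavy vertex stays heavy; and each move changes exactly one of $a,c$ by one unit while leaving the other fixed, so the parity of $a+c$ flips at every move and no move from $P$ returns to $P$.

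For condition~(iii), I start from a canonical position with $a+c$ odd and look for a winning move. The first candidate is the edge $b$--$c$, which lands in $P$ whenever $d\leq c-2$. If $d=c-1$, I fall back on the loop (works when $a\geq 2$ and $b\leq a+c-2$), the edge $a$--$b$ (works when $a\geq 2$ and $b=a+c-1$), and the edge $c$--$d$ (works when $a=1$ and $b\leq c-1$, noting that this forces $c\geq 3$). A short enumeration shows the only configuration escaping all of these is $(1,c,c,c-1)$ with $c$ even, where the loop move produces $(0,c,c,c-1)$; deleting $a$ gives WAK on a weighted $P_3$, whose total number of moves is forced to be $\min(c,b+d)=c$, making that position a $\outcomeP$-position since $c$ is even. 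For condition~(ii), a move from $P$ exits $S$ either by zeroing a weight (only $a$ when $a=1$, or $d$ when $d=1$, since canonicity forces $b,c\geq 2$) or by creating a heavy vertex ($b$ when $b=a+c-1$, $c$ when $c=b+d-1$, or $d$ when $d=c-1$). In each case I apply Proposition~\ref{prop:reduction} to simplify the target into either a weighted $P_3$ or a disjoint sum of small graphs with loops, whose Grundy value follows from Lemmas~\ref{lem:oneVertexLoop} and~\ref{lem:oneEdgeLoops}, Remark~\ref{rk:sumOneLoppTwoLoops}, and Observation~\ref{prop:playingOnANonConnectedGraph}; the parity constraint $a\equiv c\pmod 2$ inherited from $P$ makes every such Grundy value nonzero, so every exit from $S$ lands in an $\outcomeN$-position.

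The main obstacle is the bookkeeping of these boundary cases, especially configurations where several canonical inequalities are tight at once (for example $a=1$, $b=c$, and $d=c-1$), and the subcase of condition~(ii) in which the post-move graph is a three-vertex path with a loop on one end: this last case requires either an analogous preliminary application of Proposition~\ref{prop:genericProof} to that smaller family, or a joint induction on total weight across the two graph families. Each individual verification is routine given the reduction machinery and the Grundy formulas for one- and two-vertex graphs; the exhaustive enumeration of boundary cases is what makes the proof substantive.
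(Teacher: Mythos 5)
Your proposal runs on the same engine as the paper's proof: Proposition~\ref{prop:genericProof} with $P$ the positions where $a+c$ is even, the parity flip at every move giving condition~\ref{it:noPtoP}. The real difference is the choice of $S$, and it is consequential. The paper takes the much weaker set $S=\{c<b+d\}$ (absorbing because no move increases $b+d-c$); then the \emph{only} way to leave $S$ from a position of $P$ is to play $(a,b)$ when $c=b+d-1$, after which $c$ is heavy and Proposition~\ref{prop:reduction} collapses the position to two looped vertices of weights $a-1$ and $b-1$ joined by an edge plus an isolated looped vertex of weight $d$, which Remark~\ref{rk:sumOneLoppTwoLoops} shows is an $\outcomeN$-position since $(a-1)+(b-1)+d$ is odd; condition~\ref{it:NtoP} is then immediate because the loop, $(b,c)$ and $(c,d)$ all stay in $S$ and one of them is always playable. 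Your choice $S=$ canonical positions is also legitimate (your slack-monotonicity argument does make the complement absorbing, and your case analysis for condition~\ref{it:NtoP}, including the exceptional family $(1,c,c,c-1)$ resolved through the forced-length argument on the loopless weighted $P_3$, checks out), but it multiplies the exits from $S$ in condition~\ref{it:PtonotSisN}: zeroing $a$ or $d$, or making $b$, $c$ or $d$ heavy, each needs its own reduction. Most of these do collapse, via Proposition~\ref{prop:reduction} and the useless-vertex rule, to disjoint unions of one- and two-vertex looped graphs or to a loopless weighted $P_3$, and the inherited parity makes each an $\outcomeN$-position; but the subcase you flag --- playing $(c,d)$ with $d=1$ and $b<a+c-1$, landing on a \emph{canonical} three-vertex path with a loop on one end --- genuinely is not covered by Lemmas~\ref{lem:oneVertexLoop}, \ref{lem:oneEdgeLoops} or Remark~\ref{rk:sumOneLoppTwoLoops}, so the preliminary lemma you propose for that smaller family is really needed. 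It is provable non-circularly by the very same scheme (take $S'=\{\gamma<\beta\}$ for the loop-$P_3$ with weights $\alpha,\beta,\gamma$; the only exit reduces to the two-looped-vertices graph of Lemma~\ref{lem:oneEdgeLoops}), so your route does close; but the paper's weaker $S$ eliminates this auxiliary lemma and almost all of your boundary bookkeeping at no cost, which is the main lesson to take from comparing the two write-ups.
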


\begin{proof}
Let $G$ be the graph on the left in the proposition.
Let $S \subset \pos(G)$ be the set of positions satisfying $c < b+d$, and $P \subset S$ be the subset of positions for which $a+c$ is even. Note that if a position $s$ is canonical , then $s \in S$. We want to show that $S$ and $P$ satisfy the three conditions of Proposition~\ref{prop:genericProof}.
Since any move decreases $a+c$ by exactly $1$, there is no move from a position in $P$ to another position in $P$, and point~\ref{it:noPtoP} holds.

If $s$ is a position in $P$ such that there is a move from $s$ to a position $s' \not \in S$, then necessarily $s'$ is obtained by playing the edge $(a,b)$. After the move, the vertex $c$ becomes a heavy vertex, and consequently $s'$ can be simplified to: 

\begin{center} 	
\twoloops{\scriptstyle a-1}{\scriptstyle b-1} $+$ \soloop{d}
\end{center} 

Since $s \in P$ and $s' \not \in S$, we know that $c = b +d-1$, and $a + c$ is even. Consequently, $a-1 + b-1 + d = a + c-1$ is odd, which implies by Remark~\ref{rk:sumOneLoppTwoLoops} that $s'$ is an $\outcomeN$-position, and the point \ref{it:PtonotSisN} also holds.

Finally, let $s=(a,b,c,d)$ be a position such that $s \in S \setminus P$. Then one of $a$ or $c$ is odd, and thus non-zero. Consequently, there is a move from $s$ to a position $s'$ by playing either the loop attached to~$a$, or one of the edges $(b,c)$ or $(c, d)$. Since none of these move decrease the quantity $c - b - d$, we have $s' \in P$. So point \ref{it:NtoP} holds.

The three conditions of Proposition~\ref{prop:genericProof} are satisfied, and the result follows.
\end{proof}

The following Lemma is the key technical result that allows us to prove Theorem~\ref{thm:mainResult}. Using this result, the proof of the theorem will follow the following ideas. Given a tree $T$ of depth at most $2$, we can compute a more simple reduced graph, which has the same outcome as $T$. The reduced graph is a path on four vertices with a loop on one end. The outcome of the reduced graph can be computed either with the characterization of Lemma~\ref{lemma:P4withloops} if it is canonical, or by reducing the graph further to smaller components.

Note that in the statement of the Lemma, the graph on the left needs not be canonical. In particular, we may have $c \geq a + \sum_{i = 1}^k x_i$. The proof of the result is a bit technical but presents no theoretical difficulties. We simply check that the three conditions of Proposition~\ref{prop:genericProof} are satisfied, and proceed by case analysis.

\begin{lemma}
\label{th:mainTh}
Let $k>0$, and $x_1, \ldots, x_k$ and $y_1, \ldots, y_k$ be nonnegative integers such that $x_i > y_i$. Then the following holds
\begin{center}
\tikz[baseline=0]{
\node[vertex] (c) at (0,0) {$c$};
\node[vertex] (a) at (-1.5,0) {$a$};

\node[vertex] (x1) at (1.5,0.75) {$x_1$};
\node[vertex] (y1) at (3,1.5) {$y_1$};
\node[vertex] (xk) at (1.5,-0.75) {$x_k$};
\node[vertex] (yk) at (3,-1.5) {$y_k$};

\node (dots) at (1.5, 0) {$\vdots$};
\node (dots) at (3, 0.2) {$\vdots$};
\node (dots) at (3, -0.2) {$\vdots$};

\selfloop{a} ;
\draw (a) --(c) (c) -- (x1) -- (y1) (c) -- (xk) -- (yk);

}
$\qquad \sim \ $ 
\tikz[baseline=0]{
\node[vertex] (c) at (0,0) {$c$};
\node[vertex] (a) at (-1,0) {$a$};

\node[vertex] (X) at (1,0) {$X$};
\node[vertex] (Y) at (2,0) {$Y$};

\selfloop{a} ;
\draw (a) --(c) -- (X) -- (Y) ;	
\node at (2,-1) {where $X = \sum_{i = 1}^k x_i$ and $Y = \sum_{i = 1}^k y_i$.}
}

\end{center}	
\end{lemma}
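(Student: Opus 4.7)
The plan is to apply Proposition~\ref{prop:genericProof}. Write $G$ for the left-hand graph and $G'$ for the right-hand graph. Let $S \subseteq \pos(G)$ be the set of positions with $x_i > y_i$ for every $i$, and define the projection $\phi : S \to \pos(G')$ by $\phi(a, c, x_1, y_1, \ldots, x_k, y_k) = (a, c, X, Y)$ with $X = \sum_i x_i$ and $Y = \sum_i y_i$. Set $P = \{ s \in S : \phi(s) \text{ is a } \outcomeP\text{-position of } G' \}$. Showing that the $\outcomeP$-positions of $G$ inside $S$ are exactly the elements of $P$ will immediately give the outcome-equivalence $s \sim \phi(s)$ for every $s \in S$, and in particular for any position satisfying the hypotheses of the lemma.

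The key structural observation is that $\phi$ is compatible with moves: the four types of move in $G$ (loop on $a$, edge $(a,c)$, edge $(c, x_i)$, edge $(x_i, y_i)$) decrease the same coordinates of $\phi(s)$ as the corresponding moves in $G'$; in particular $(c, x_i)$ projects to $(c, X)$ and $(x_i, y_i)$ projects to $(X, Y)$. Also, $S$ is closed with respect to its complement: the quantity $x_i - y_i$ is preserved by $(x_i, y_i)$ and only decreased by $(c, x_i)$, so no move can take a position outside $S$ back into $S$. Condition~(i) of Proposition~\ref{prop:genericProof} is then immediate: from $s \in P$, any move to $s' \in S$ projects to an option of $\phi(s)$, which must be $\outcomeN$ since $\phi(s)$ is $\outcomeP$, so $s' \in S \setminus P$.

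For condition~(iii), from $s \in S \setminus P$ the position $\phi(s)$ is $\outcomeN$ and therefore admits a winning move in $G'$ to some $\outcomeP$-position $t'$. If a winning move is the loop on $a$, the edge $(a,c)$, or the edge $(X, Y)$, then a corresponding $G$-move stays in $S$ (for $(X, Y)$ pick any index $i$ with $y_i \geq 1$, which exists since $Y \geq 1$) and lands in $P$. If the only winning move is $(c, X)$, we try to find some index $i$ with $x_i \geq y_i + 2$ and play $(c, x_i)$, which again lands in $P$. The remaining obstruction occurs when every $x_i$ equals $y_i + 1$: each $(c, x_i)$-move then leaves $S$, and the resulting position must be shown directly to be a $\outcomeP$-position of $G$.

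The main technical obstacle, which covers both condition~(ii) and this leftover subcase of~(iii), is analyzing positions $s'$ reached from some $s$ by playing $(c, x_i)$ in the situation $x_i = y_i + 1$. After such a move the new weight $x_i'$ equals $y_i$, so $y_i$ becomes a heavy vertex of $G$ (its unique neighbor $x_i$ has equal weight), and by Proposition~\ref{prop:reduction} we can collapse $y_i$ into a loop attached to $x_i$. The reduced graph has $k-1$ two-vertex branches plus one single-vertex branch carrying a loop, whose outcome can be computed by induction on $k$ combined with Lemma~\ref{lemma:P4withloops}, Lemma~\ref{lem:oneEdgeLoops}, and Remark~\ref{rk:sumOneLoppTwoLoops}. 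Matching the outcome thus obtained against the one predicted by the parity-and-inequality conditions defining $P$ (via the characterization of $\outcomeP$-positions of $G'$) reduces to a mechanical but tedious case analysis on the parities of $a$, $c$, $X$, $Y$ and on whether $c < X + Y$, which is where all the bookkeeping lives.
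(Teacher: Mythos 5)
Your proposal follows the paper's proof essentially step for step: the same projection of positions $(a,c,x_1,\ldots,y_k)\mapsto(a,c,X,Y)$, the same sets $S$ and $P$, the same appeal to Proposition~\ref{prop:genericProof}, and the same induction on $k$ via the heavy-vertex collapse when $x_i = y_i+1$ (the paper additionally merges the resulting looped vertex with $a$ as false twins so that the induction hypothesis applies). The parity bookkeeping you defer is exactly what occupies the bulk of the paper's argument, carried out with Lemma~\ref{lemma:P4withloops}, Lemma~\ref{lem:oneEdgeLoops} and Remark~\ref{rk:sumOneLoppTwoLoops} (and note the relevant case split there is $X < Y+c$ versus $X \geq Y+c$, not $c < X+Y$), but your outline is correct and identifies all the needed tools.
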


\begin{proof}

Denote by $G_1, G_2, G_3, G_4$ the graphs shown on Figure~\ref{fig:th_moves}. On Figure~\ref{fig:th_moves}, the equivalence $G_2 \sim G_4$ if $X \geq Y+c$ is obtained by simplifying the heavy vertex $X$ in $G_2$. If $X < Y+c$, then either $c < a + X$, and in this case $G_2$ is canonical, and the outcome equivalence $G_2 \sim G_3$ is obtained by Lemma~\ref{lemma:P4withloops}, or $c \geq a +X$, and then the vertex $c$ is heavy. In this case, the outcome equivalence $G_2 \sim G_3$ is obtained by simplifying the vertex $c$, and then removing the now useless vertex~$Y$.

On the figure, the edges of these four graphs are marked with labels. Given an edge $e$ of $G_1$, we will denote by $f(e)$ the edge of $G_2$ with the same label as $e$. Additionally, if $s = (a, c, x_1, \ldots x_k, y_1, \ldots y_k)$ is a position of $G_1$, we will also denote by $f(s)$ the position $(a,c, X, Y)$ of $G_2$ where $X = \textstyle \sum_{i = 1}^k x_i$ and $Y = \textstyle \sum_{i=1}^k y_i$.
With these notations, we can easily check that for any two positions $s$ and $s'$ of $G_1$ such that there is a move from $s$ to $s'$, there is a move in $G_2$ from $f(s)$ to $f(s')$ by playing $f(e)$.

If either $X$ or $Y$ is equal to zero, then the outcome equivalence holds easily. Consequently, we will assume in the following $X > 0$ and $Y >0$.
We will prove the outcome-equivalence by induction on $k$. Clearly, the result holds when $k = 1$ since in this case both sides are the same. Consequently, we suppose $k > 1$, and assume that the results holds when there are less than $k$ branches.

Let $S$ be the set of positions $(a, c, x_1, \ldots x_k, y_1, \ldots y_k)$ of $G_1$ such that for all $i$, we have $x_i > y_i$. Let $P$ be the subset of positions $s \in S$ such that $f(s)$ is a $\outcomeP$-position of $G_2$. As previously, we will show that $S$ and $P$ satisfy the three conditions of Proposition~\ref{prop:genericProof}.

\begin{figure}[!ht]
\begin{tikzpicture}

\node[vertex] (c) at (0,0) {$c$};
\node[vertex] (a) at (-1.5,0) {$a$};

\node[vertex] (x1) at (1,0.75) {$x_1$};
\node[vertex] (y1) at (2,1.5) {$y_1$};
\node[vertex] (xk) at (1,-0.75) {$x_k$};
\node[vertex] (yk) at (2,-1.5) {$y_k$};

\node (dots) at (1, 0) {$\vdots$};
\node (dots) at (2, 0.2) {$\vdots$};
\node (dots) at (2, -0.2) {$\vdots$};

\draw[looseness = 8] (a) edge[in = 60, out=120] node[midway, above, inner sep=0pt] {$\scriptstyle (1)$} (a) ;
\draw (a) -- node[midway, above, inner sep=0pt] {$\scriptstyle (2)$} (c) 
(c) -- node[pos=0.6, above left, inner sep=-1pt] {$\scriptstyle (3)$} (x1) --  node[pos=0.6, above left, inner sep=-1pt] {$\scriptstyle (4)$} (y1) 
(c) --  node[pos=0.6, below left, inner sep=0pt] {$\scriptstyle (3)$} (xk) --  node[pos=0.6, below left, inner sep=0pt] {$\scriptstyle (4)$} (yk);

\node[fit=(a)(y1)(yk), draw, inner sep=10pt, rounded corners=10pt, color = gray] (box1) {} ;
\node[anchor= south west, inner sep=5pt, color = gray] (labbox1) at (box1.south west) {$G_1$} ;
\draw[gray, rounded corners = 3pt] (labbox1.north west) -| (labbox1.south east) ;

\node (eq1) at (3,0) {$\sim$} ;
\node[above = -0.1cm of eq1] {?} ;


\foreach \i/\lab in {0/$a$, 1/$c$, 2/$X$, 3/$Y$} {
\node[vertex] (p\i) at (3.8+1.2*\i, 0) {\lab} ;
}
\draw (p0) -- node[midway, above, inner sep=0pt] {$\scriptstyle (2)$} 
(p1) -- node[midway, above, inner sep=0pt] {$\scriptstyle (3)$}
(p2) -- node[midway, above, inner sep=0pt] {$\scriptstyle (4)$} (p3) ;
\draw[looseness = 8] (p0) edge[in = 60, out=120] node[midway, above, inner sep=0pt] (t) {$\scriptstyle (1)$} (p0) ;

\node[right= 5cm of box1.north, anchor = north, outer sep=0pt] (dum1) {} ;
\node[right= 5cm of box1.south, anchor = south, outer sep=0pt] (dum2) {} ;

\node[fit=(p0)(p3)(dum1)(dum2), draw, inner sep=0pt, rounded corners=10pt, color = gray, inner xsep = 5pt] (box2) {} ;
\node[anchor= south west, inner sep=5pt, color = gray] (labbox2) at (box2.south west) {$G_2$} ;
\draw[gray, rounded corners = 3pt] (labbox2.north west) -| (labbox2.south east) ; 

\node at (8.2,0) {$\sim$} ;

\node[vertex] (pa) at (9.7,0.8) {$a$} ;
\node[vertex] (pX) at (10.7,0.8) {$X$} ;
\draw[looseness = 8] (pa) edge[in = 60, out=120] node[midway, above, inner sep=0pt] (dum3) {$\scriptstyle (1)$} (pa) ; 
\draw[looseness = 8] (pX) edge[in = 60, out=120] node[midway, above, inner sep=0pt] {$\scriptstyle (4)$} (pX) ;

\node[vertex] (qa) at (9.7,-1.5) {$a$} ;
\node[vertex] (qc) at (10.9,-1.5) {$c$} ;
\node[vertex] (qY) at (11.9,-1.5) {$Y$} ;

\draw (qa) --  node[midway, above, inner sep=0pt] {$\scriptstyle (2)$}  (qc) ;
\draw[looseness = 8] (qa) edge[in = 60, out=120] node[midway, above, inner sep=0pt] (dum4) {$\scriptstyle (1)$} (qa) ; 
\draw[looseness = 8] (qc) edge[in = 60, out=120] node[midway, above, inner sep=0pt] {$\scriptstyle (3)$} (qc) ; 
\draw[looseness = 8] (qY) edge[in = 60, out=120] node[midway, above, inner sep=0pt] {$\scriptstyle (4)$} (qY) ; 

\draw [decorate,decoration={brace,amplitude=10pt}] (8.9,-2) -- (8.9,2) ;

\node[anchor=west] (if1) at (11.2, 1.3) {\scriptsize if $\scriptstyle X < Y +c$} ;

\node[anchor=west] (if2) at (12.2, -1) {\scriptsize if $\scriptstyle X \geq Y +c$} ;
\node[right= -13pt of if2, inner sep = 0pt] (dumif2) {} ;

\node[fit=(pa)(dum3)(if1), draw, inner sep=10pt, rounded corners=10pt, color = gray] (box3) {} ;
\node[anchor= south east, inner sep=5pt, color = gray] (labbox3) at (box3.south east) {$G_3$} ;
\draw[gray, rounded corners = 3pt] (labbox3.north east) -| (labbox3.south west) ; 

\node[fit=(qa)(dum4)(dumif2), draw, inner sep=10pt, rounded corners=10pt, color = gray] (box4) {} ;
\node[anchor= south east, inner sep=5pt, color = gray] (labbox4) at (box4.south east) {$G_4$} ;
\draw[gray, rounded corners = 3pt] (labbox4.north east) -| (labbox4.south west) ;

\end{tikzpicture}
\caption{\label{fig:th_moves} With $X = \sum_{i=1}^k x_i$ and $Y = \sum_{i=1}^k y_i$. The corresponding edges between the graphs are marked with the same labels. }
\end{figure}
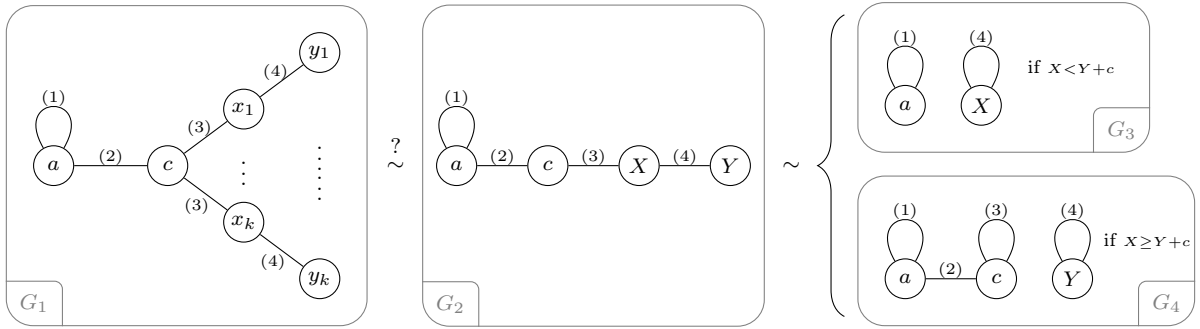

\begin{itemize}		
\item[$\ref{it:noPtoP}$] If $s$ and $s'$ are two positions in $P$, then there is no move from $s$ to $s'$. Indeed, suppose by contradiction that there is a move from $s$ to $s'$ by playing an edge $e$. Then $f(s)$ and $f(s')$ are both $\outcomeP$-positions of $G_2$ by definition of $P$, and there is a move from $f(s)$ to $f(s')$ by playing the edge $f(e)$, a contradiction.

\item[$\ref{it:PtonotSisN}$] Let $s = (a, c, x_1, \ldots x_k, y_1, \ldots y_k)$ be a position in $P$. Let us prove that there is no move from $s$ to a position $s' \not \in S$ such that $s'$ is a $\outcomeP$-position. Suppose by contradiction that this is not the case, and let $s'$ be a $\outcomeP$-position with $s' \not \in S$, such that there is a move from $s$ to $s'$. Since $s' \not \in S$, this move necessarily corresponds to playing an edge $e$ marked~$(3)$ in Figure~\ref{fig:th_moves}. Without loss of generality, we can suppose that $e = (c, x_k)$. Since before the move, we had $s \in S$, and after the move we have $s' \not \in S$, we know that $x_k = y_k + 1$. The position $s'$ is not canonical. Indeed, after playing the edge $e$, the vertex with weight $y_k$ becomes a heavy vertex. We can apply the simplification procedure which will first remove this vertex and put a loop on $x_k$, and then merge the vertices with weights $x_k -1$ and $a$. Consequently, $s'$ has the same outcome as:

\begin{align}
\tikz[baseline=0]{			
\node[vertex] (c) at (0,0) {$\scriptstyle c-1$};
\node[vertex] (a) at (-1.5,0) {$a'$};
\node[vertex] (x1) at (1.5,0.75) {$x_1$};
\node[vertex] (y1) at (3,1.5) {$y_1$};
\node[vertex] (xk) at (1.5,-0.75) {$\scriptstyle x_{k\scalebox{0.85}{-}1}$};
\node[vertex] (yk) at (3,-1.5) {$\scriptstyle y_{k\scalebox{0.85}{-}1}$};
\node (dots) at (1.5, 0) {$\vdots$};
\node (dots) at (3, 0.2) {$\vdots$};
\node (dots) at (3, -0.2) {$\vdots$};
\selfloop{a} ;
\draw (a) --(c) (c) -- (x1) -- (y1) (c) -- (xk) -- (yk);
}
\qquad \sim \ 
\tikz[baseline=0]{
\node[vertex] (c) at (0,0) {$c$};
\node[vertex] (a) at (-1,0) {$a'$};
\node[vertex] (X) at (1,0) {$X'$};
\node[vertex] (Y) at (2,0) {$Y'$};
\selfloop{a} ;
\draw (a) --(c) -- (X) -- (Y) ;	
} \label{proof:reductionGraph}
\end{align}
where $a' = a + x_k -1$, $X' = X - x_k$ and $Y' = Y - Y_k$.	The equivalence above is obtained by applying induction hypothesis (there are only $k-1$ branches). Denote by $G'_2$ the simplified graph above.

If $X < c + Y$, then we have $G_2 \sim G_3$ (see Figure~\ref{fig:th_moves}). Since $s \in P$, we know that $G_2$ is a $\outcomeP$-position, and consequently $a + X$ is even. However, since $x_k = y_k + 1$, we also have $X - x_k\geq c -1 + Y - y_k$, and by the same argument, the outcome $G'_2$ is $\outcomeP$ if and only if $a' + X'$ is even. Since $a' + X' = a + x_k -1 + X - x_k = a + X -1$, this is not possible.

Thus, we can suppose that $X \geq c + Y$, and then $G_2 \sim G_4$ (see Figure~\ref{fig:th_moves}). Since $s \in P$, $G_2$ is a $\outcomeP$-position, and by Remark~\ref{rk:sumOneLoppTwoLoops} we know that $a + c + Y$ is even. By the equality $x_k = y_k + 1$, we also have $X - x_k \geq c-1 + Y - y_k$. Consequently, $G'_2$ is not canonical. Indeed, the vertex with weight $X'$ is a heavy vertex, and by applying the simplification procedure, $G'_2$ has the same outcome as:
\begin{align}
\twoloops{a'}{\scriptstyle c-1} + \soloop{Y'} \label{proof:reductionGraph2}
\end{align}
Now, since $a +c + Y$ is even, we know that the quantity $a'+c-1 + Y' = a + x_k - 1+ c - 1 + Y - y_k = a + c - 1 + Y$ is odd, and by Remark~\ref{rk:sumOneLoppTwoLoops}, this implies that the position above is an $\outcomeN$-position. Consequently, $G'_2$ is also an $\outcomeN$-position, a contradiction.

\item[$\ref{it:NtoP}$] Finally, let $s = (a, c, x_1, \ldots x_k, y_1, \ldots y_k)$ be a position of $G_1$ such that $s \in S \setminus P$. We want to prove that either $s$ has a move to some position $s'$ in $P$, or there is a move to a $\outcomeP$-position not in~$S$. By assumption, $f(s)$ is a $\outcomeN$-position of $G_2$. We distinguish two possible cases:
\begin{itemize}[listparindent=\parindent]
\item There is an edge $e_2$ of $G_2$ such that playing $e_2$ from the position $f(s)$ is a winning move, and $e_2$ has a label different from $(3)$. Let $e \in f^{-1}(e_2)$ be an edge of $G_1$ with the same labels as $e_2$ such that playing $e$ from $s$ is a legal move, and let $s'$ be the position obtained from $s$ after playing $e$. Since $e$ has a label different from $(3)$, we have $s' \in S$, and consequently  $s' \in P$ since $f(s')$ is a $\outcomeP$-position. This proves that there is a move from $s$ to a position $s' \in P$.

\item The only winning move from $f(s)$ on $G_2$ is playing the edge with label $(3)$. As above take $e$ an edge of $G_1$ with label $(3)$ which is a possible move on $s$, and denote by $s'$ the position obtained from $s$ after playing $e$. If $s' \in S$, then $s' \in P$ since $f(s')$ is a $\outcomeP$-position. Thus, we can suppose $s' \not \in S$. We can also assume that playing an edge in $G_2$ with a label different than $(3)$ is a losing move. We want to prove that $s'$ is a $\outcomeP$-position.

If $X < Y +c$, then we can see from Figure~\ref{fig:th_moves} that there is a winning move in $G_2$ from the position $f(s)$ by playing an edge with label either $(1)$ or $(4)$. Thus, we can suppose that $X \geq Y + c$, which implies that the outcome of $G_2$ is the same as the outcome of $G_4$. We can also assume that the edge labeled $(3)$ is the only winning move for $G_4$. Indeed, if there was a winning move in $G_4$ with a label différent than $(3)$, there would be a winning move in $G_2$ with the same label.

Using the characterization of Grundy values from Lemma~\ref{lem:oneEdgeLoops}, we know that the grundy value of $G_4$ is $(a + c + Y \mod 2) + 2(\min(a,c) \mod 2)$. Since $G_4$ is an $\outcomeN$-position, this quantity is not zero. We can assume that $\min(a,c)$ is odd. Indeed, if it were even, then decreasing $Y$ by one by playing the edge $(4)$ would be a winning move on $G_4$. We also have $c < a$, since otherwise one of the two edges marked $(1)$ and $(2)$ would be a winning move. This implies that $c$ is odd. And finally, since playing the edge $(3)$ is a winning move, the position obtained by decreasing $c$ by one must be a $\outcomeP$-position, which implies that $a + c-1 + Y$ is even, and consequently, $a + Y$ is even.

Without loss of generality, we can assume that $e=(c,x_k)$, and $x_k = y_k + 1$. Using the same argument as in point $(ii)$, we know that $s'$ has the same outcome as the reduced graph in relation~(\ref{proof:reductionGraph}). Since we know that $X \geq Y + c$, and consequently $X - x_k \geq Y - y_k + c -1 $, this graph can be further simplified to the graph in~(\ref{proof:reductionGraph2}). We know that $c < a$, which implies that $a' = a + x_k - 1 > c-1$. Additionally, $c-1$ is even, and $Y' + a' = Y - y_k + a + x_k + 1 = Y +a$ is even. Hence $a$ and $Y$ have the same parity, and by Remark~\ref{rk:sumOneLoppTwoLoops} this implies that this graph is a $\outcomeP$-position, and consequently, $s'$ is a $\outcomeP$-position.
\end{itemize}
\end{itemize}

The three conditions of Proposition~\ref{prop:genericProof} hold, and the result follows.

\end{proof}

Theorem~\ref{thm:mainResult} is a corollary of this result. We recall that the depth of a vertex $u$ in a rooted tree is the number of edges in the path from the root to $u$ (in particular, the root has depth~0).

\begin{reptheorem}{thm:mainResult}
There is a polynomial time algorithm computing the outcome of \lejeu\ on any loopless tree of depth at most $2$.
\end{reptheorem}

\begin{proof}
Let $T$ be a rooted tree of depth at most $2$. In $T$, two leaves attached to the same vertex are false twins. Thus by Lemma~\ref{prop:reduction}, they can be merged without changing the outcome of the game. In the resulting tree, each vertex is adjacent to at most one leaf. Now, if a leaf at depth $2$ is heavy, then it can be removed, and a loop attached to its neighbor. The vertices with a loop are all false twins (they are all adjacent only to the root of the tree), and can then be merged into a single vertex. If a leaf is adjacent to the root, then we can attach a vertex with weight~0 to it. Let $T'$ be the resulting tree. 

By Proposition~\ref{prop:reduction}, we have $T\equiv T'$. By applying Lemma~\ref{th:mainTh}, the tree $T'$ can be further reduced to $P$, a path on four vertices with a loop on one end. If $P$ is not canonical, then it can again be reduced to one or several connected components, each with one or two vertices. The Grundy values of these components can be computed thanks to Lemma~\ref{lem:oneVertexLoop} and Lemma~\ref{lem:oneEdgeLoops}. Otherwise, if $P$ is canonical, and Lemma~\ref{lemma:P4withloops} gives the outcome value for $P$. Since all the reductions, and the characterization of Lemma~\ref{lemma:P4withloops} can be computed in polynomial time, this gives a polynomial time algorithm computing the outcome of any tree of depth at most $2$.
\end{proof}

We can see from Lemma~\ref{th:mainTh} and Lemma~\ref{lemma:P4withloops} that the outcome of \lejeu\ for a tree of depth at most 2 depends only on the parities of some of the weights and inequalities between them. This is not the case for $C_3$ (the cycle on three vertices), for which the periodicity of $\outcomeP$-positions does not follow this pattern but rather seems to depend on the values modulo 4.
A question that arises is whether more complex behaviours can emerge for larger or denser graphs.

However, if the weight of one vertex is large compared to the other weights, we will see in the next section that the behaviour remains simple.

\section{Periodicity}
\label{sec:perio}


In this section, we show a periodicity result on the outcome of \lejeu\ positions. More precisely, if we fix the number of counters for all vertices but one, say vertex $v_1$, the outcomes of this sequence of position is ultimately periodic, with period at most $2$. If there is no loop on $v_1$, when the weight of $v_1$ is large enough, it becomes a heavy vertex. Thanks to Proposition~\ref{prop:reduction} we already know that the sequence of outcomes is ultimately constant. The following result handles the case where there is a loop on vertex~$v_1$.

\begin{theorem}
\label{th:perio}
Let $G$ an unweighted graph with vertices $v_1, \ldots v_n$ such that there is a loop attached to $v_1$. Fix the integers $\omega_i \geq 0$ for $i \geq 2$, and let $\{S_x\}_{x\geq 0}$ be the sequence such that for every $x$, $S_x$ is the outcome of $(x, \omega_2, \ldots \omega_n) \in \pos(G)$. Then $\{S_x\}_{x\geq 0}$ is ultimately $2$-periodic with preperiod at most $2 \sum_{i \geq 2} \omega_i$.
\end{theorem}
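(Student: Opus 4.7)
The approach is induction on $W = \sum_{i\geq 2}\omega_i$. Write $z = (\omega_2,\ldots,\omega_n)$, $G_{y;z}$ for the position with $\omega(v_1)=y$ and other weights given by $z$, and $S_y := $ outcome$(G_{y;z})$. For the base case $W = 0$, every $\omega_i$ with $i\geq 2$ vanishes and only the loop on $v_1$ is playable; Lemma~\ref{lem:oneVertexLoop} gives $\grundy(G_{y;z}) = y\bmod 2$, so the outcomes alternate $\outcomeP,\outcomeN,\outcomeP,\ldots$ from $y = 0$, matching the bound $2W = 0$.

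For the inductive step I classify the options of $G_{y;z}$ (for $y\geq 1$): the loop yields $G_{y-1;z}$; a non-loop edge $(v_1,v_j)$ with $z_j\geq 1$ yields $G_{y-1;z^j}$, where $z^j$ decreases $z_j$ by one; a non-loop edge $(v_i,v_j)$ with $i,j\geq 2$ and $z_i,z_j\geq 1$ yields $G_{y;z^{i,j}}$. Proposition~\ref{prop:outcomes} then gives the recursion
\[
S_y = f(S_{y-1},A_y), \qquad f(\outcomeN,\mathrm{true}) = \outcomeP, \quad f = \outcomeN \text{ otherwise,}
\]
where $A_y$ is the conjunction that every non-loop option of $G_{y;z}$ is an $\outcomeN$-position.

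Applying the induction hypothesis to each $z^j$ (total weight $W-1$) and each $z^{i,j}$ (total weight $W-2$), the outcome sequences appearing in $A_y$ are $2$-periodic from indices $2W-2$ and $2W-4$ respectively. Since $A_y$ depends on these sequences at indices $y-1$ and $y$, it is $2$-periodic in $y$ for $y\geq 2W-1$. Hence for $y \geq 2W-1$, $A_y$ depends only on the parity of $y$; writing $a = A_{2W-1}$ and $b = A_{2W}$, the recursion becomes a deterministic two-state dynamics driven by a period-two signal.

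A short case analysis on the four possibilities for $(a,b)$ shows that $\{S_y\}$ is $2$-periodic from index at most $2W$: in the cases $a = \mathrm{T}$ and $b = \mathrm{T}$, or $a = \mathrm{F}$ and $b = \mathrm{T}$, or $a = b = \mathrm{F}$, the sequence already stabilizes by $y = 2W-1$; in the remaining case $a = \mathrm{T}$, $b = \mathrm{F}$ at most one extra step is needed, yielding stabilization by $y = 2W$. I expect this terminal case analysis---together with the parity bookkeeping around the index $2W-1$---to be the main (and essentially only) obstacle: no strategy-stealing or mirroring argument is necessary because, at the level of outcomes rather than Grundy values, the dynamics is finite-state and driven entirely by the periodic signal $A_y$.
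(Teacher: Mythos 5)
Your proof is correct and follows essentially the same route as the paper: induction on the total weight $W=\sum_{i\geq 2}\omega_i$, an auxiliary period-two signal (your $A_y$ is just the negation of the paper's function $g$, which records whether some non-loop move is winning), and the same four-case analysis of that signal giving $2$-periodicity of the outcomes with preperiod at most $2W$. The only cosmetic difference is that your enumeration of options omits loops attached to vertices $v_i$ with $i\geq 2$ (the paper mentions them parenthetically); they have total remaining weight $W-1$ with the first coordinate unchanged, so they are covered by the same induction-hypothesis bound and do not affect the argument.
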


\begin{proof}
We show this result by induction on $\Omega = \sum_{i \geq 2} \omega_i$. If the $\omega_i$ are all zeros, then $G(x, 0, \ldots 0)$ is equivalent to a graph with a single vertex and a loop attached to it. Its outcome is $\outcomeN$ if $x$ is odd, and $\outcomeP$ if $x$ is even.
Suppose that $\Omega > 0$. From the position $(x, \omega_2, \ldots \omega_n)$, there are three types of possible moves:
\begin{enumerate}
\item $(x-1, \omega_2, \ldots \omega_n)$ by playing on the loop attached to $v_1$.
\item $(x-1, \omega'_2, \ldots \omega'_n)$ with $\sum_{i \geq 2} \omega'_i = \Omega -1$ by choosing an edge adjacent to $v_1$.
\item $(x, \omega'_2, \ldots \omega'_n)$ with $\sum_{i \geq 2} \omega'_i = \Omega -2$ by choosing an edge not adjacent to $v_1$ (or $\Omega -1$ if the edge is a loop).
\end{enumerate}
Let $g$ be the function such that $g(x) = 1$ if, from position $(x, \omega_2, \ldots)$, there is a winning move of type $2.$ or $3.$, and $g(x) = 0$ otherwise. Using the induction hypothesis, $g(x)$ is ultimately periodic with period at most $2$, and preperiod at most $2 \Omega -1$.

Since the function $g$ takes values in $\{0,1\}$ and is $2$-periodic, there are only $4$ possibilities for the values of $g(2\Omega-1), g(2\Omega), \ldots$ . The possible values for $g(2 \Omega -1+ i)$ and the sequence of outcomes of the positions $(2 \Omega -1+i, \omega_2, \ldots))$, for $i \geq 0$ are summarized in Table~\ref{tab:periodicity}.
We can see that in all four cases in the table, the outcome is periodic starting at $i \geq 2\Omega$.

\begin{table}[!ht]
\centering
\def\arraystretch{2}
\begin{tabular}{|c|p{9cm}|}
\hline
$g(2\Omega-1 + i)$, $i \geq 0$		&	Sequence of outcomes of $(2\Omega-1+i, \omega_2, \ldots, \omega_n)$, $i \geq 0$ \\
\hline
$1,1, 1,1 \ldots$	& 	$\outcomeN, \outcomeN, \outcomeN \ldots$ since there is always a winning move of the form $2$ or $3$.\\
$0,0, 0, 0 \ldots$	& 	$\outcomeP, \outcomeN,\outcomeP, \outcomeN, \ldots$ or $\outcomeN, \outcomeP, \outcomeN, \outcomeP \ldots$, depending on the outcome of $(2\Omega-2, \omega_2, \ldots)$. Indeed, playing anything else than the loop attached to $v_1$ is a losing move and the outcome alternates between $\outcomeN$ and $\outcomeP$. \\
$1,0, 1,0 \ldots$	&	$\outcomeN, \outcomeP, \outcomeN, \outcomeP \ldots$. Indeed, if $i$ is even, then $g(2\Omega-1 + i) =1$, consequently there is a winning move of type $2$ or $3$. If $i$ is odd, then moves of type $2$ and $3$ are losing move, and so is the move on the loop attached to $v_1$.\\
$0,1, 0,1 \ldots$	& 	$X,\outcomeN, \outcomeP,\outcomeN, \outcomeP,\ldots$ where $X$ can be either $\outcomeN$ or $\outcomeP$. Indeed, if $i$ is odd, then $g(2\Omega-1 + i) =1$, consequently there is a winning move of type $2$ or $3$. If $i$ is even and different from $0$, then moves of type $2$ and $3$ are losing move, and so is the move on the loop attached to $v_1$. When $i=0$, the outcome can be either $\outcomeP$ or $\outcomeN$ depending on the outcome of $(2\Omega-2, \omega_2, \ldots)$. \\
\hline
\end{tabular}
\caption{\label{tab:periodicity} Table of periodicity of the sequence of outcomes of $(2\Omega-1+i, \omega_2, \ldots, \omega_n)$, $i \geq 0$ depending on the periodic values of $g$.}
\end{table}

\end{proof}

\begin{corollary}
Given an unweighted graph $G$, the sequence $\{\grundy((x, \omega_2 \ldots, \omega_n))\}_{x \geq 0}$ of Grundy values for positions of $G$ is ultimately $2$-periodic. If the Grundy values in the periodic part are bounded by $k$, then there is a constant $c_k$ only depending on $k$ such that the preperiod is at most $2\sum_{i \geq 2} \omega_i + c_k$.
\end{corollary}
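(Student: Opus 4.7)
The plan is to mirror the proof of Theorem~\ref{th:perio} but track Grundy values rather than outcomes. Write $g(x)=\grundy((x,\omega_2,\ldots,\omega_n))$ and $\Omega=\sum_{i\ge 2}\omega_i$, and split on whether $v_1$ carries a loop. If it does not, then once $x\ge\sum_{v\in N(v_1)}\omega(v)$ the vertex $v_1$ becomes heavy, so Proposition~\ref{prop:reduction} shows that $g(x)$ equals the Grundy value of an $x$-independent reduced graph. Hence $g$ is eventually constant, which is trivially $2$-periodic with preperiod at most $\Omega\le 2\Omega+c_k$ for any $c_k\ge 0$.

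Henceforth assume that $v_1$ carries a loop, and proceed by induction on $\Omega$. The base $\Omega=0$ is immediate from Lemma~\ref{lem:oneVertexLoop}, giving $g(x)=x\bmod 2$. For the inductive step, partition the options of $(x,\omega_2,\ldots,\omega_n)$ into three types exactly as in Theorem~\ref{th:perio}: (1)~playing the loop on $v_1$ leads to $(x-1,\omega_2,\ldots,\omega_n)$; (2)~playing an edge incident to $v_1$ leads to some $(x-1,\omega'_2,\ldots,\omega'_n)$ with $\sum\omega'_i=\Omega-1$; (3)~playing an edge avoiding $v_1$ leads to some $(x,\omega'_2,\ldots,\omega'_n)$ with $\sum\omega'_i\le\Omega-1$. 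Let $A(x)\subseteq\mathbb{N}$ collect the Grundy values of all options of types (2) and (3). By the inductive hypothesis each contributing sub-sequence is ultimately $2$-periodic in its first coordinate, and since only finitely many sub-weights arise, $A(x)$ is itself ultimately $2$-periodic.

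Let $A_i$ be the stable value of $A(x)$ for $x\equiv i\pmod 2$, and define $m_i=\mex(A_i)$ and $f_i(v)=\mex(A_i\cup\{v\})$, so that once $A$ has stabilised the recursion reads $g(x+1)=f_{(x+1)\bmod 2}(g(x))$. The key observation is that $f_i(v)=m_i$ whenever $v\neq m_i$, while $f_i(m_i)=\mex(A_i\cup\{m_i\})>m_i$. A short case analysis of $G:=f_0\circ f_1$, splitting on whether or not $m_0=m_1$, then shows that $G(\mathbb{N})\subseteq\{m_0,m_0^+\}$ with $m_0^+:=\mex(A_0\cup\{m_0\})$, and that both of these values are fixed points of $G$. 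Hence a single iteration of $G$ already lands at a fixed point, so $g(x+2)=g(x)$ for all $x$ beyond the stabilisation time of $A(x)$ plus a constant, proving the ultimate $2$-periodicity.

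For the preperiod bound, assume that the periodic values of $g$ lie in $\{0,\ldots,k\}$. Then $m_0,m_1\le k$, so only membership in $A(x)$ of values $v\le k$ is relevant for determining $g(x)$. One identifies which sub-sequences $\phi_{\omega'}(y)=\grundy((y,\omega'_2,\ldots,\omega'_n))$ can contribute a value $\le k$ to $A(x)$ for large $x$, and applies the inductive hypothesis to each such sub-sequence, whose relevant periodic value is by definition at most $k$; collapsing the resulting constants into a single constant depending only on $k$ yields a preperiod of at most $2(\Omega-1)+c'_k$ for the sub-sequences and hence at most $2\Omega+c_k$ for $g$. The main obstacle is precisely this step: one must propagate the $k$-boundedness through the induction so that the constants arising from sub-sequences can be collapsed into a single constant depending only on $k$ and not on $\Omega$. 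The right framing is to introduce a notion of ``$k$-relevant sub-sequence'' (one whose periodic values hit $\{0,\ldots,k\}$) and to show that there are only finitely many periodic profiles for such sub-sequences, bounded uniformly in $k$.
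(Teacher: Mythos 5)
Your route is genuinely different from the paper's, and its first half (ultimate $2$-periodicity) is essentially workable, but with one slip: the claim that both $m_0$ and $m_0^+:=\mex(A_0\cup\{m_0\})$ are fixed points of $G=f_0\circ f_1$ is false in general. If $m_1\notin\{m_0,m_0^+\}$ then $f_1(m_0^+)=m_1$ and $f_0(m_1)=m_0$, so $G(m_0^+)=m_0\neq m_0^+$. This is repairable: one always has $G(\mathbb{N})\subseteq\{m_0,m_0^+\}$, $G(m_0)=m_0$, and $G(m_0^+)\in\{m_0,m_0^+\}$, so every orbit of $G$ is constant after at most two iterations, which still gives eventual $2$-periodicity of $g$ (with a slightly larger additive constant).

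The genuine gap is the preperiod bound $2\sum_{i\geq 2}\omega_i+c_k$, which you yourself flag as ``the main obstacle'' and only sketch. The difficulty is structural: your inductive hypothesis bounds the preperiod of a sub-sequence $\phi_{\omega'}$ by $2\Omega'+c_{k'}$ where $k'$ bounds \emph{that sub-sequence's} periodic Grundy values, and $k'$ may well exceed $k$ (it is only bounded by a graph-dependent quantity such as the number of edges). A sub-sequence whose periodic values are, say, $\{0,17\}$ is ``$k$-relevant'' for $k=0$, yet the only stabilisation time your hypothesis gives for it involves $c_{17}$, a constant depending on the graph and weights rather than on $k$; so the constants you propose to ``collapse'' are not uniform, and the assertion that there are only finitely many periodic profiles bounded uniformly in $k$ is exactly the unproved content. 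To close the gap you would need a strengthened induction, e.g.\ that for every fixed value $j$ the indicator sequence of $\{x:\grundy((x,\omega_2,\ldots,\omega_n))=j\}$ is ultimately $2$-periodic with preperiod at most $2\sum_{i\geq2}\omega_i+c_j$. The paper obtains precisely this statement with a short trick that avoids redoing the induction: apply Theorem~\ref{th:perio} to the disjoint sum $G+U$, where $U$ is any fixed position with $\grundy(U)=k$; then $(x,\omega_2,\ldots)$ has Grundy value $k$ in $G$ if and only if the corresponding position of $G+U$ is a $\outcomeP$-position, so the indicator is ultimately $2$-periodic with preperiod at most $2\sum_{i\geq2}\omega_i+2\sum_i\omega'_i$, and taking $U$ with $\sum_i\omega'_i$ minimal among positions of Grundy value $k$ yields $c_k$. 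You should either adopt that reduction or prove the strengthened value-by-value statement; as written, your argument establishes the periodicity but not the claimed dependence of the preperiod on $k$ alone.
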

\begin{proof}
Simply observe that we can replace $G$ by $G+U$ (the disjoint union of $G$ and $U$) in the previous statement, for any other graph $U$. Now if the Grundy value of $(\omega'_1, \ldots \omega'_p) \in \pos(U)$ is $k$, then $(x, \omega_2, \ldots, \omega'_1\ldots) \in \pos(G+U)$ is a $\outcomeP$-position if and only if $(x, \omega_2, \ldots) \in \pos(G)$ has Grundy value $k$. Consequently, by applying the result of Theorem~\ref{th:perio} on $G+U$, we know that the Grundy values of $(x, \omega_2, \ldots)\in \pos(G)$ are ultimately $2$-periodic. The preperiod is at most $2 \sum \omega_i + 2 \sum \omega'_i$. Taking $U$ and $(\omega'_i)_{i \geq1}$ such that $\sum \omega'_i$ is the smallest possible with $\grundy((\omega'_1, \ldots)) = k$, and noting $c_k = 2 \sum \omega'_i$ gives the desired result.
\end{proof}

\section{Unboundedness of Grundy values}
\label{sec:unbound}

The problem of finding a graph family with unbounded Grundy values is open for a large number of vertex and edge deletion games. Recently, some results have been achieved for the game \textsc{Graph Chomp}~\cite{recentChomp}. However, for most of these games, the graph families that are studied tend to have ultimately periodic Grundy sequences. For example, in their study of \nodek\ in~\cite{fleischer}, the authors found increasingly many irregularities in the non-periodic parts of the Grundy sequences for subdivided stars with three paths, but gave no indication as to whether the irregular values were bounded or not.

In this section, we prove that the Grundy values of \lejeu\ are unbounded. This, coupled with the fact that any position of \lejeu\ has an equivalent position of \arck, as was shown in Section~\ref{sec:first}, also proves that the Grundy values of \arck\ are unbounded. This result holds even if we restrict ourselves to only play on forests. This answers a problem posed in~\cite{huggan}, where the unboundedness of the Grundy values for \arck\ 
was conjectured. 
Since \arck\ played on a graph $G$ is \nodek\ played on the line graph of $G$, this also implies that the Grundy values of \nodek\ are unbounded.

We remind the reader that the grundy value of a non-connected graph can be obtained from the values of its connected components:

\begin{repobs}{prop:playingOnANonConnectedGraph}
Let $G$ be a non-connected graph, and $G_1,\ldots,G_k$ its connected components.
We have: $$\grundy(G)=\grundy(\sum_{i=1}^{k} G_i)=\bigoplus_{i=1}^{k}\grundy(G_i).$$
\end{repobs}

We prove the following result:

\begin{reptheorem}{prop:grundyValuesUnbounded}
The Grundy values for the game \lejeu~are unbounded.
\end{reptheorem}

\begin{proof}
We inductively build a sequence of graphs, $G_1, G_2, G_3\ldots$ such that for any $i \neq j$, $G_i$ and $G_j$ have different Grundy values. We construct $G_i$ in such a way that there is a vertex $u_i$ of $G_i$ with a loop attached on $u_i$ such that playing the loop is a winning move.

We take $G_1 = \scalebox{0.7}{\soloop{1}}$. There is only one move on $G_1$ and it is a winning move.

Given a positive integer $n$, suppose that we have built the graphs $G_1,\ldots,G_{n}$ with this property. For $1 \leq i \leq n$, let us denote by $u_i$ the vertex of $G_i$ such that there is a winning move on $G_i$ by playing the loop attached to $u_i$. We construct the graph $G_{n+1}$ in the following way, pictured on Figure~\ref{fig:grundyValuesUnbounded}:
\begin{enumerate}
\item For all $i \leq n$, we create two copies $G'_i$ and $G''_i$ of $G_i$;
\item We create a vertex $u_{n+1}$ of weight 1 with a loop, which is connected to the vertex $u'_i$ of every $G'_i$.
\end{enumerate}

\begin{figure}[!ht]
\centering
\begin{tikzpicture}
\node[vertex] (un) at (0,0) {$\scriptscriptstyle u_{n+1}$};
\draw[looseness=8] (un) edge[in=60, out=120] (un);

\node[vertex] (un-1) at (-2,-2) {$u'_{1}$};
\draw[looseness=8] (un-1) edge[in=60, out=120] (un-1);
\draw (un)--(un-1);
\draw (un-1)--(-2.75,-3.5)--(-1.25,-3.5)--(un-1);
\draw (-2,-3.15) node {$G'_{1}$};
\draw (0,-2) node {\ldots};
\node[vertex] (u1) at (2,-2) {$u'_{n}$};
\draw[looseness=8] (u1) edge[in=60, out=120] (u1);
\draw (un)--(u1);
\draw (u1)--(2.75,-3.5)--(1.25,-3.5)--(u1);
\draw (2,-3.15) node {$G'_{n}$};

\node[vertex] (un-1b) at (4,-1) {$u''_{1}$};
\draw[looseness=8] (un-1b) edge[in=60, out=120] (un-1b);
\draw (un-1b)--(4.75,-2.5)--(3.25,-2.5)--(un-1b);
\draw (4,-2.15) node {$G''_{1}$};
\draw (5.5,-1) node {\ldots};
\node[vertex] (u1b) at (7,-1) {$u''_{n}$};
\draw[looseness=8] (u1b) edge[in=60, out=120] (u1b);
\draw (u1b)--(7.75,-2.5)--(6.25,-2.5)--(u1b);
\draw (7,-2.15) node {$G''_{n}$};
\end{tikzpicture}
\caption{The inductive construction of the graph $G_{n+1}$. Note that every vertex has a weight of 1.}
\label{fig:grundyValuesUnbounded}
\end{figure}
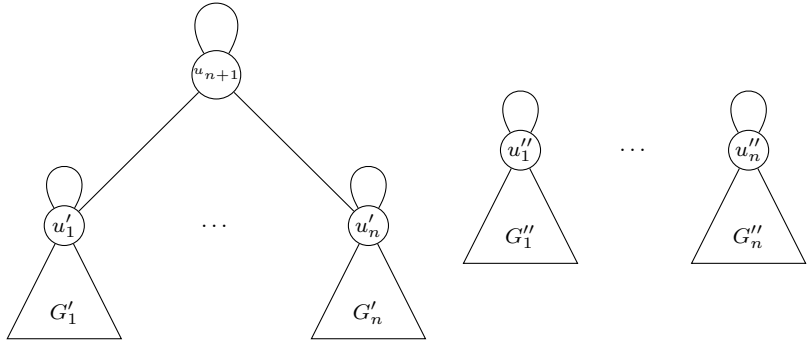

Let us now prove that \ref{it:premier} for all $i\leq n$, we have $\grundy(G_{n+1}) \neq \grundy(G_i)$ and \ref{it:second} playing on the loop attached to $u_{n+1}$ leads to a $\outcomeP$-position:
\begin{enumerate}[label=(\roman*)]
\item \label{it:premier} Let $i \leq n$, we will show that there is an option $G'$ of $G_{n+1}$ such that $\grundy(G') = \grundy(G_{i})$. Using the definition of the Grundy value with the $mex$ operator, this implies that $\grundy(G_{n+1}) \neq \grundy(G_i)$. Let $G'$ be the option of $G_{n+1}$ obtained by playing on the edge $(u'_i,u_{n+1})$. Since a vertex with no counters left can be removed without changing the game, we now delete both vertices and their incident edges. The graph $G'$ is composed of several components, and can be written as the disjoint sum:
$\sum_{\substack{j=1 \\ j \neq i}}^{n} ( G_j + G_j ) + G'_i + G_i$ where $G'_i$ is the graph obtained from $G_i$ after playing on the loop attached to $u_i$. Using the induction hypothesis, we know that $G'_i$ is a $\outcomeP$-position, hence $\grundy(G'_i) = 0$. Proposition~\ref{prop:playingOnANonConnectedGraph} ensures that the Grundy value of $G'$ is:

$$\grundy(G') =  \bigoplus_{\substack{j=1 \\ j \neq i}}^{n} ( \grundy(G_j) \oplus \grundy(G_j) ) \oplus 0 \oplus \grundy(G_i) = \grundy(G_i)$$
\item \label{it:second} Let $G'$ be the the graph obtained from $G_{n+1}$ by playing on the loop attached to $u_{n+1}$. Then $G'$ can be written as the disjoint sum $G' = \sum_{j=1}^{n} (G_j+G_j)$, which has Grundy value~0 and thus is a $\outcomeP$-position.
\end{enumerate}
Thus, the Grundy value of $G_{n+1}$ is different from the Grundy values of the $G_i$ for any $i \leq n$, and a winning move is to play on the loop attached to $u_{n+1}$. This completes the induction step.
\end{proof}

As shown with Corollary~\ref{cor:arckiswak}, from any position of \lejeu, one can compute a position of \arck\ with the same Grundy value. Moreover, any position of \arck\ can be changed into an equivalent position of \nodek. Thus Theorem~\ref{prop:grundyValuesUnbounded} implies the following:

\begin{corollary}
\label{cor:grundyValuesUnboundedArcK}
The Grundy values for the games \arck~and \nodek~are unbounded.
\end{corollary}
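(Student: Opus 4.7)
The plan is to chain together three results already established in the paper, and essentially no new work is needed. First, I would invoke Theorem~\ref{prop:grundyValuesUnbounded}, which furnishes an explicit infinite family $(G_n)_{n \geq 1}$ of \lejeu\ positions whose Grundy values are pairwise distinct; this immediately shows that $\{\grundy(G_n) : n \geq 1\}$ is unbounded.

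Second, I would transport this family to \arck\ by applying Corollary~\ref{cor:arckiswak} to each $G_n$: the corollary produces an unweighted loopless graph $G'_n$ such that $\grundy(G'_n)$, computed as an \arck\ position, equals $\grundy(G_n)$. Unboundedness of the right-hand side therefore forces unboundedness of the left-hand side, proving the statement for \arck. I would note explicitly that the blow-up in size pointed out after Corollary~\ref{cor:arckiswak} is irrelevant here, since the claim is purely about the existence of positions with arbitrarily large Grundy values, not about a polynomial-time construction.

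Third, to transfer the conclusion to \nodek, I would use the classical observation recalled in the opening of Section~\ref{sec:unbound}: \arck\ on a graph $H$ is the same game as \nodek\ on the line graph $L(H)$. Indeed, edges of $H$ correspond bijectively to vertices of $L(H)$, and deleting an edge $e$ together with all edges sharing an endpoint with $e$ in $H$ is exactly the same operation as deleting the vertex $v_e$ together with all its neighbours in $L(H)$. This bijection identifies the two game trees and hence preserves Grundy values, so the sequence $(L(G'_n))_{n \geq 1}$ witnesses that the Grundy values of \nodek\ are also unbounded.

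There is no real obstacle, since all the hard work is absorbed into Theorem~\ref{prop:grundyValuesUnbounded} and Corollary~\ref{cor:arckiswak}. The only point that requires a moment of care is verifying that the line-graph equivalence preserves Grundy values, but this follows immediately from the fact that the move/position structures on both sides are literally isomorphic under $e \leftrightarrow v_e$, so the argument can be stated in a single sentence.
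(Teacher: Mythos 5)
Your proposal is correct and follows exactly the paper's route: the paper also deduces the corollary by chaining Theorem~\ref{prop:grundyValuesUnbounded} with the translation of Corollary~\ref{cor:arckiswak} to get unboundedness for \arck, and then uses the observation that \arck\ on a graph is \nodek\ on its line graph to conclude for \nodek. No differences worth noting.
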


The construction in the proof of Theorem~\ref{prop:grundyValuesUnbounded} gives a family of graphs of exponential size (by induction, $G_n$ has $3^{n-1}$ vertices). Since all the vertices have weight 1, the \arck\ positions that we obtain by applying the construction described in the proof of Corollary~\ref{cor:arckiswak} are of similar size.
It may be of interest to find a family of graphs with unbounded Grundy values and of polynomial size, both for \lejeu\ and for \arck.

%
%
%

\end{document}